\documentclass[12pt, pdflatex]{amsart}

\usepackage[alphabetic,nobysame]{amsrefs}
\usepackage{amssymb}
\usepackage{fancyhdr}
\usepackage{bbm}

\usepackage[all]{xy}
\usepackage{xcolor}
\usepackage{hyperref}
\hypersetup{
  colorlinks=true,
  linkcolor=blue,
  citecolor=blue
}
\usepackage{cleveref}
\usepackage{mathrsfs}
\usepackage{mleftright}
\usepackage{mathdots}

\newtheorem{theorem}{Theorem}
\newtheorem*{theorem*}{Theorem}
\newtheorem{lemma}[theorem]{Lemma}
\newtheorem*{lemma*}{Lemma}
\newtheorem{proposition}[theorem]{Proposition}
\newtheorem{claim}[theorem]{Claim}
\newtheorem{corollary}[theorem]{Corollary}

\newtheorem{itheorem}{Theorem} 

\theoremstyle{definition}

\newtheorem{remark}[theorem]{Remark}
\newtheorem*{definition*}{Definition}
\usepackage{graphicx}

\numberwithin{equation}{section}

\numberwithin{theorem}{section}

\newcommand{\RR}{\mathbf{R}}
\newcommand{\NN}{\mathbf{N}}

\newcommand{\ZZ}{\mathbf{Z}}

\newcommand{\sP}{\mathscr{P}}
\newcommand{\sPf}{\mathscr{P}_\mathrm{fin}}
\newcommand{\sPt}{\mathscr{P}_\mathrm{pin}}

\newcommand{\incr}{\NN_\nearrow}
\newcommand{\incrR}{\RR_\nearrow}

\def\dd{\mathrm{d}}

\DeclareMathOperator{\End}{End}
\DeclareMathOperator{\Emb}{Emb}
\DeclareMathOperator{\Epi}{Epi}

\begin{document}

\title[Fixed points, faces and deleting entries]{A fixed-point theorem for face maps,\\ or deletion-tolerant  random finite sets}

\author[Hutchcroft]{Tom Hutchcroft}
\author[Monod]{Nicolas Monod}
\author[Tamuz]{Omer Tamuz}
\address{T.H., O.T.: Caltech, Pasadena, CA 91125, USA}
\address{N.M.: EPFL, 1015 Lausanne, Switzerland}



\begin{abstract}
We establish a fixed-point theorem for the face maps that consist in deleting the $i$th entry of an ordered set. Furthermore, we show that there exists random finite sets of integers that are almost invariant under such deletions. Consequences for various monoids of order-preserving transformations of $\NN$ are discussed in an appendix.
\end{abstract}

\maketitle
\section{Introduction}
\subsection*{Results}
Consider a finite set $E$ of integers. If $E$ has at least $k$ elements, then we can study the operation $\alpha_1$ of deleting its smallest element, $\alpha_2$ of deleting the second smallest, and so on up to $\alpha_k$. More formally, if $E=\{y_1 < \cdots < y_{n}\}$ then $\alpha_i (E) = E \smallsetminus \{y_i\}$.

Our first result is that there exist random sets which are almost invariant under all these deletion operations.

\begin{itheorem}\label{thm:random-sets}
Fix $k\geq 2$ and $\epsilon >0$.

There exists a finitely supported probability distribution $\mu$ on the collection of finite sets $E\subseteq \NN$ with at least $k$ elements such that $\alpha_i (\mu)$ is $\epsilon$-close to $\mu$ in total variation distance for all $1\leq i \leq k$.
\end{itheorem}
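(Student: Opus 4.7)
The plan is to construct $\mu$ as the law of a carefully designed random finite subset of $\NN$. A natural starting point is to parametrise $E = \{y_1 < y_2 < \cdots < y_n\}$ by the minimum $y_1$, the cardinality $n$, and the gap sequence $g_i = y_{i+1}-y_i$: in these coordinates $\alpha_1$ replaces $y_1$ by $y_1 + g_1$ and deletes $g_1$, while $\alpha_i$ for $i > 1$ merges the consecutive gaps $g_{i-1}$ and $g_i$ into a single gap $g_{i-1}+g_i$ and leaves all other coordinates intact. Randomising $y_1$ uniformly on a long interval $\{1,\ldots,T\}$ and $n$ uniformly on $\{k,\ldots,k+L\}$ will handle the contributions of $\alpha_1$, keeping that part of the total-variation defect down to $O(T^{-1}\mathbb{E}[g] + 1/L)$.

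A first attempt, taking the gaps i.i.d.\ from some distribution $\phi$ on $\NN$, already fails. Indeed, conditionally on sizes being matched up, the total-variation defect of $\alpha_i$ for $i > 1$ equals $\|\phi - \phi \ast \phi\|_{\mathrm{TV}}$ (only one coordinate of an otherwise-i.i.d.\ sequence gets altered), and this quantity is bounded below by a universal positive constant over all distributions $\phi$ on $\NN$, because $\mathbb{E}(\phi \ast \phi) = 2\mathbb{E}(\phi)$ forces substantial mass redistribution. So the gap sequence cannot be i.i.d.

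The technical heart of the proof---and the main obstacle---is therefore to exhibit a joint distribution on the gap sequence whose pushforward under the merge-one-pair operation is close in TV to the original. I would seek such a distribution via a multiscale or hierarchical construction: for instance, take the gaps to be the increments of a random walk whose step distribution is itself a mixture over many scales, or introduce carefully tuned long-range correlations so that merging two adjacent gaps resembles replacing them by a comparable pair coming from the same joint law. Amenability of the face monoid $M_k$---which satisfies the simplicial identities $\alpha_i \alpha_j = \alpha_{j-1}\alpha_i$ for $i<j$ and has polynomial growth $\binom{L+k}{k}$ via the normal form $\alpha_k^{n_k}\cdots \alpha_1^{n_1}$---suggests that such an invariant measure should exist. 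However, a naive Folner argument directly on $M_k$ is obstructed by the fact that the left action of $\alpha_i$ for $i<k$ collapses the normal-form lattice onto the hyperplane $\{n_k = 0\}$ rather than translating it, so the invariant measure must be built by reasoning on subsets of $\NN$ rather than by lifting through the monoid action.
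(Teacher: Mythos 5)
Your reduction to the gap-sequence picture is a nice reformulation, and your observation that i.i.d.\ gaps cannot work is essentially correct and echoes the paper's Proposition~\ref{prop:no_random_walk} (though the clean lower bound there goes via the median rather than the mean, since $\mathbb{E}(\phi)$ may be infinite). However, the proposal stops precisely where the real proof has to begin: you name the need for a joint gap distribution almost invariant under merge-a-pair, but you do not produce one, and the ``multiscale or hierarchical'' suggestion is an aspiration, not an argument. The paper itself flags this obstacle explicitly: the conjectural exponential-tower construction in \Cref{sec:explicit} is left unproven beyond $k=2$, and \Cref{prop:no_random_walk} rules out a large class of explicit candidates. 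So your plan, as written, has a genuine gap at its technical heart.

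There is also a concrete error in the heuristic you lean on. The monoid you call $M_k$ is the truncated facial monoid $S_k$, and the paper's \Cref{prop:Sn} shows it is \emph{non}-amenable for every $k\geq 2$, despite the polynomial growth you correctly compute from the descending normal forms (\Cref{lem:monoid-inj}). Polynomial growth implies amenability for groups, but not for monoids; indeed \Cref{sec:amen} records that the amenable monoid $S$ itself fails the symmetric F\o lner condition. So ``amenability of $M_k$'' is not available to motivate the existence of your gap distribution. The paper's route is quite different: it constructs exact $\alpha_i$-invariant finitely additive \emph{means} $\nu_j$ on $\incr^j$ via the Arens convolution built from an idempotent mean on $\NN$ (\Cref{thm:omega-invariant}), and then extracts finitely supported probability measures from an averaged invariant mean by Goldstine's theorem and Day's/Mazur's argument. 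That passage through means is not a convenience but the device that dissolves the obstruction you ran into, because the Arens product is not continuous with respect to countably additive measures; your proposal, staying entirely in the world of genuine random variables, does not have access to this escape.
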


Working with integers is merely a matter of convenience since $\alpha_i$ is defined in terms of the ordering only. Thus, the above result holds unchanged when $\NN$ is replaced by 
any partially ordered set containing chains of arbitrary length. 

\bigskip

Our next result is a fixed-point principle for continuous affine transformations of compact convex sets (in any Hausdorff topological vector space). Recall that this is the context of the Markov--Kakutani Theorem, which states that any \emph{commuting} family of such transformations has a common fixed point. This fails of course in general for non-commuting transformations.

\begin{itheorem}\label{thm:fixed-pt}
Let $s_i$ be a family of continuous affine transformations of a compact convex set $K\neq\varnothing$, where $i\in\NN$.

\nopagebreak
If $s_j s_i = s_i s_{j+1}$ holds for all $1\leq i \leq j$, then $K$ contains a point fixed by all $s_i$.
\end{itheorem}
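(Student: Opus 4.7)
The plan is to construct approximate common fixed points using Theorem~\ref{thm:random-sets} and then pass to a limit by compactness. Fix $k \geq 2$ and $\eps > 0$, let $\mu$ be a measure as supplied by Theorem~\ref{thm:random-sets}, and choose $N$ large enough that every $E \in \operatorname{supp}(\mu)$ is contained in $\{1,\ldots,N\}$. For each $E \subseteq \{1,\ldots,N\}$ I introduce the continuous affine self-map
\[
T_E \;:=\; s_{z_1} s_{z_2} \cdots s_{z_{N-|E|}} \;:\; K \to K,
\]
where $\{z_1 < z_2 < \cdots < z_{N-|E|}\} = \{1,\ldots,N\}\setminus E$ is the complement of $E$; in particular $T_{\{1,\ldots,N\}} = \operatorname{id}_K$.

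The heart of the argument is the identity $s_i\circ T_F = T_{\alpha_i F}$ for every $F \subseteq \{1,\ldots,N\}$ and every $i \in \{1,\ldots,|F|\}$. Writing $F = \{y_1 < \cdots < y_n\}$, reinserting the deleted element $y_i$ into the complement of $F$ places it at position $p+1$, where $p = y_i - i$ is the number of elements of $\{1,\ldots,N\}\setminus F$ strictly below $y_i$. The identity therefore reduces to
\[
s_i \cdot s_{z_1} s_{z_2} \cdots s_{z_p} \;=\; s_{z_1} s_{z_2} \cdots s_{z_p} \cdot s_{y_i},
\]
which I will prove by induction on $p$. At the $j$-th step, the relation $s_a s_b = s_b s_{a+1}$ (valid for $a \geq b$) is applied with $a = i+j-1$ and $b = z_j$, advancing the traveling index from $i+j-1$ to $i+j$. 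The hypothesis $z_j \leq i + j - 1$ holds because $\{1,\ldots,z_j\}$ is exhausted by the $j$ complement elements $z_1,\ldots,z_j$ together with at most $i - 1$ further elements of $F$, all of which lie strictly below $y_i$. After $p$ such applications the traveling index has grown from $i$ to $i + p = y_i$, as required.

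Given the identity, fix any base point $x_0 \in K$ and set $y := \sum_E \mu(E)\, T_E(x_0) \in K$. Then for each $i \leq k$,
\[
s_i(y) \;=\; \sum_E \mu(E)\, T_{\alpha_i E}(x_0) \;=\; \sum_F (\alpha_i\mu)(F)\, T_F(x_0).
\]
Decomposing $\alpha_i \mu - \mu$ into its positive and negative parts exhibits $s_i(y) - y$ as a scalar of order $\dtv(\alpha_i\mu,\mu) \leq \eps$ times a difference of two points of $K$. Since $K$ is compact, $K - K$ is bounded in the ambient topological vector space, so this forces $s_i(y) - y \to 0$ as $\eps \to 0$ for every fixed $i \leq k$. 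Letting $k \to \infty$ and $\eps \to 0$, compactness of $K$ extracts a subnet of the resulting approximate fixed points converging to some $y^* \in K$, and continuity of each $s_i$ then yields $s_i(y^*) = y^*$ for every $i \in \NN$.

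The principal obstacle is the combinatorial identity $s_i T_F = T_{\alpha_i F}$; once it is in hand, the remainder is a standard averaging argument against the almost-invariant measures furnished by Theorem~\ref{thm:random-sets}.
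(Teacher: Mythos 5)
Your argument is correct and takes a genuinely different route from the paper's. The crux is the identity $s_i T_F = T_{\alpha_i F}$, which your traveling-index induction verifies correctly: the counting argument gives $z_j \leq i+j-1$, so the relation $s_{i+j-1}s_{z_j}=s_{z_j}s_{i+j}$ applies at each step and carries the index from $i$ to $i+p=y_i$. The averaging and limit steps are then standard, using that $K-K$ is compact hence bounded, provided one is careful that the net of approximate fixed points is indexed over pairs $(k,\eps)$ and that for a fixed $i$ the estimate only kicks in once $k\geq i$. The paper proves \Cref{thm:fixed-pt} by a longer path: it establishes flat normal forms for the face monoid $S$, identifies $S$ with $\sPf(\NN)$ so that left multiplication by $s_i$ becomes the hole-filling map $\sigma_i$ (\Cref{lem:flat-normal}, \Cref{lem:S-fin}), passes the deletion-tolerant means through the pin-headed complement bijection to obtain a left-invariant mean on $S$ (\Cref{lemma:delete-fill}, \Cref{lem:alpha-sigma}, \Cref{cor:sigma-invariant}), and then applies Day's fixed-point theorem (\Cref{thm:Day}). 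Your proof treats \Cref{thm:random-sets} as a black box and collapses the rest into a single computation on $K$: your $T_E$ is precisely the element of $S$ whose flat normal form is the increasing word on $\{1,\ldots,N\}\smallsetminus E$, your key identity is a direct verification of the composite content of \Cref{lem:S-fin} and \Cref{lem:alpha-sigma}, and your averaging-to-a-limit argument is a re-derivation of the implication \eqref{pt:Day:Reiter}$\Rightarrow$\eqref{pt:Day:fixed} of \Cref{thm:Day}. This is not circular, since \Cref{thm:random-sets} is proved in the paper without reference to \Cref{thm:fixed-pt}, and it is more economical if \Cref{thm:fixed-pt} alone is the goal; the paper's route is longer because it also develops the normal-form and monoid-identification machinery that it needs elsewhere (\Cref{lem:monoid-inj}, \Cref{prop:Sn}).
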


It is imperative in \Cref{thm:fixed-pt} that the family $s_i$ be infinite: if for some $n\geq 2$ we had only $s_1, \ldots, s_n$, then the relations  $s_j s_i = s_i s_{j+1}$ would only make sense for $j\leq n-1$. Any $K$ that is not reduced to a point admits such transformations without a common fixed point. Indeed, choose $x\neq y$ in $K$ and define $s_i$ to be the constant map to $x$ if $1\leq i \leq n-1$, whilst $s_n$ is the constant map to $y$.

\subsection*{Discussion of the results}
The relations $s_j s_i = s_i s_{j+1}$ are very familiar in algebraic topology, where they appear as face relations, as formalised notably by Eilenberg--Zilber~\cite{Eilenberg-Zilber50}. Face operations are usually defined separately in every dimension; for instance, with $n$ fixed, the operations $\alpha_i$ above satisfy the facial relations for  $i\leq j\leq n-1$ and this is a standard model for faces. In that context, \Cref{thm:random-sets} states that any infinite-dimensional simplical complex carries random simplices almost-invariant under passing to faces.

Instead, we propose to define a single monoid $S$ given by generators $s_i$ subject to the above relations. Then \Cref{thm:fixed-pt} states that this monoid is \textbf{amenable}. This definition of amenability in terms of fixed points is equivalent to the existence of a left-invariant mean, as studied extensively since the foundational articles by von Neumann~\cite{vonNeumann29} and Day~\cite{Day57}. We refer to \Cref{sec:amen} below for terminological clarifications. In particular, we record there that $S$ satisfies one form of F\o lner's condition but not another one.

Observe that $S$ is the increasing union of the sub-monoids $S_n<S$ generated by the first $n$ elements $s_1, \ldots , s_n$. We shall prove that $S_n$ coincides with the monoid defined by the same presentation as $S$ but truncated on these $n$ generators. It then follows from the above remark on fixed-point-free actions that none of the $S_n$ is amenable when $n\geq 2$, see \Cref{prop:Sn}.

That an amenable monoid $S$ is the nested union of non-amenable sub-monoids $S_n$ cannot happen for groups. Is is not especially remarkable in the world of monoids and semigroups; nonetheless, it illustrates that the monoid $S$ is not too elementary and that its amenability ``hides at infinity''.

The monoid $S$ might deserve further interest. It acts naturally on many sequence spaces; in \Cref{sec:Finetti}, we investigate some of the associated ergodic theory, showing notably that the $S$-action on shift spaces exhibits a De Finetti type rigidity of invariant measures. The relation between $S$ and sequences is further described in \Cref{sec:related}, where we broaden the viewpoint: both $S$ and its opposite monoid sit in larger Polish monoids. We clarify which ones are amenable, be it as topological or abstract monoids.

\medskip
Our approach to the above two theorems rests on the observation that they are two visages of the same phenomenon. In particular, an explicit construction of random finite sets would lead to a proof of the fixed-point statement. However, it turns out that our argument ends up instead using a fixed-point principle to establish the existence of the random sets.

In fact it seems non-trivial to control random finite sets directly; we could only verify that explicit constructions will satisfy \Cref{thm:random-sets} for $k=2$. In \Cref{sec:explicit}, we exhibit a concrete model of random sets for which one could conjecture almost-invariance.

\medskip
In closing, we cannot evade a mention of Thompson's group, which admits the group presentation
\begin{equation*}
F = \big\langle g_i : g_j g_i = g_i g_{j+1} \ \forall\, 1\leq i < j  \big\rangle.
\end{equation*}
Consider the monoiod $F_+$ given by that same presentation, but as monoid presentation. Then $S$ is, by definition, a monoid quotient of $F_+$. It is well-known that $F$ is amenable if and only $F_+$ is so; this follows immediately from a normal form decomposition for $F$. On the other hand, the (non-)amenability of $F$ remains a notorious open problem.

A step of our proof of \Cref{thm:fixed-pt} can evoke echoes of the work of Moore for $F$, specifically~\cites{Moore_hindman,Moore_idem}. It is not clear, however, that there exists a formal connection there.

Thus, one of the appeals of $S$ is that it is a complicated quotient of $F_+$; by contrast, we recall that the group $F$ admits no proper quotients at all, except via its abelianization $\ZZ^2$.

In that context, an indication of the richness of $S$ and a further motivation for \Cref{thm:fixed-pt} is that $F_+$ embeds into a semi-direct product of $S$ by an abelian monoid (or group), which is amenable. See the end of \Cref{sec:amen}.

\subsection*{Convention}
In this text, the natural numbers $\NN=\{1, 2, \ldots \}$ start with $1$. We write $\sP_k(\NN)$ for the collection of subsets of $\NN$ of size $k$, and $\sP_{\geq k}(\NN)$ for those of size at least $k$.

\section{Proof of Theorem~\ref{thm:random-sets}: A law of very large numbers}\label{sec:idem}
A \textbf{mean} on a set $X$ is a positive normalized linear functional on $\ell^\infty(X)$, or equivalently a finitely additive probability measure on $X$. For instance, probability distributions on $X$ are means, and are in fact dense in the space of means for the weak-* topology of duality with $\ell^\infty(X)$ by Goldstine's theorem~\cite{Goldstine38}. We can also consider means as Radon probability measures on the Stone--\v{C}ech compactification of $X$.

\begin{remark}\label{rem:subset}
Occasionally, we will use implicitly the fact that means on a subset $Y\subseteq X$ correspond canonically and uniquely to means $\nu$ on $X$ satisfying $\nu(Y)=1$.
\end{remark}

A mean $\nu$ is called \textbf{diffuse} if $\nu(E)=0$ for every finite set $E$. Informally, we think of a diffuse mean on $\NN$ as representing random very large numbers.

\bigskip

We write $\incr^k$ for the set of strictly increasing sequences of $k$ positive integers; these are naturally identified with the subsets of $\NN$ of size $k$. Given $k\geq 1$ and $1\leq j \leq k+1$, we consider the operation $\alpha_j\colon \incr^{k+1} \to \incr^{k}$ which deletes the $j$th element of the sequence. For instance, given $y_1 < \cdots  < y_{k+1}$, we have 
\begin{align*}
\alpha_1(y_1, y_2,y_3 \ldots , y_{k+1}) &= (y_2,y_3,  \ldots , y_{k+1}), \text{ and}\\
\alpha_2(y_1, y_2,y_3 \ldots , y_{k+1}) &= (y_1,y_3,  \ldots , y_{k+1}).
\end{align*}
These are the same maps $\alpha_i$ defined before the statement of Theorem~\ref{thm:random-sets} after identifying $\incr^k$ with $\sP_k(\NN)$.

Given a mean $\nu$ on $\NN$, we define a mean $\nu_k$ on  $\incr^k$ recursively. Start with $\nu_1 = \nu$ on $\incr^1 = \NN$. Then, for $k\geq 1$ and $f\in \ell^\infty(\incr^{k+1})$ we set
\begin{equation*}
\nu_{k+1}(f) = \nu \mleft( x \mapsto \nu_{k} \mleft( f^x \mright) \mright)
\end{equation*}
wherein $f^x \in \ell^\infty(\incr^{k})$ is defined by
\begin{equation*}
f^x (y_1, \ldots , y_{k}) = f (y_1, \ldots , y_{k},  y_{k}+x).
\end{equation*}
\begin{remark}\label{rem:RW}
We can loosely think of $\nu_k$ as recording successive sums of IID copies of $\nu$, or the trajectory of a $\nu$-random walk on $\NN$. Specifically, if $A_j\subseteq \NN$ are any sets and $A\subseteq \incr^{k+1}$ denotes the collection of those $(y_1, \ldots, y_{k+1})$ for which $y_1\in A_1$ and $y_{j+1} - y_j \in A_{j+1}$, then we have $\nu_{k+1}(A) = \nu(A_1) \nu(A_2) \cdots \nu(A_{k+1})$.

However we caution the reader that this is in no way a formal definition of $\nu_k$ because means on products are not determined by their values on product sets, and any form of Fubini fails. This is related to the fact that the  Stone--\v{C}ech compactification of a product is much larger than the product of the  Stone--\v{C}ech compactifications. Instead, the definition above is the non-commutative convolution in the sense of Arens~\cite{Arens51}.
\end{remark}

\bigskip

From now on we specialise to a mean $\nu$ satisfying the \textbf{idempotence} condition
\begin{equation}\label{eq:idem}
\forall \, f\in\ell^\infty(\NN) : \nu(f) = \nu \mleft( x \mapsto \nu\mleft(x'\mapsto f(x+x') \mright) \mright).
\end{equation}
This condition holds notably whenever $\nu$ is an invariant mean, since invariance implies $\nu\mleft(x'\mapsto f(x+x') \mright) = \nu(f)$. The classical source of invariant means on $\NN$ is to take any accumulation point, in the compact space of means, of the sequence of uniform distributions on $\{1, \ldots, n\}$. Seeing means as functionals on $\ell^\infty(X)$, this coincides with the sequence of Ces{\`a}ro averages.

We note that the only finitely supported convolution idempotent on $\ZZ$ is the point mass at zero; since $\nu$ is supported on $\NN=\{1,2,\ldots\}$, one readily checks that it must be diffuse.

\begin{theorem}\label{thm:omega-invariant}
For all $k\geq 1$ and  $1\leq j \leq k+1$ we have $\alpha_j \nu_{k+1} = \nu_{k}$.
\end{theorem}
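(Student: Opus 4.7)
My plan is to unfold the recursive definition of $\nu_k$ into a single explicit iterated mean, and then to compare $\nu_{k+1}(g\circ\alpha_j)$ with $\nu_k(g)$ by collapsing two consecutive integrations using the idempotence relation \eqref{eq:idem}. A straightforward induction on $k$ turns the recursion into the closed form
\begin{equation*}
\nu_k(g) \;=\; \nu\mleft(x_k \mapsto \nu\mleft(x_{k-1} \mapsto \cdots \nu\mleft(x_1 \mapsto g(x_1,\, x_1+x_2,\, \ldots,\, x_1+\cdots+x_k)\mright)\cdots\mright)\mright)
\end{equation*}
for $g\in\ell^\infty(\incr^k)$; each iterated mean is well-defined because at every stage the integrand is a bounded function of the surviving variables (with the later ones acting as parameters).

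With this representation in hand, the case $j=k+1$ is immediate: $(g\circ\alpha_{k+1})(y_1,\ldots,y_{k+1}) = g(y_1,\ldots,y_k)$ is independent of $y_{k+1}$, hence of $x_{k+1}$, so the outermost integration drops out and delivers $\nu_k(g)$. For $1\leq j\leq k$ the key observation will be that $(g\circ\alpha_j)(y_1,\ldots,y_{k+1})$, read as a function of the $x_i$, depends only on the $k$ quantities $x_1,\ldots,x_{j-1},\, x_j+x_{j+1},\, x_{j+2},\ldots,x_{k+1}$, because $y_{j+1}-y_{j-1} = x_j+x_{j+1}$. Evaluating the iterated integral for $\nu_{k+1}(g\circ\alpha_j)$ from the innermost variable outward, I first integrate $x_1,\ldots,x_{j-1}$ while holding $x_j,\ldots,x_{k+1}$ as parameters; the result is a bounded function of $(x_j,\ldots,x_{k+1})$ that still factors through $x_j+x_{j+1}$, say $\tilde\Psi(x_j+x_{j+1},\,x_{j+2},\ldots,x_{k+1})$. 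Applying \eqref{eq:idem} with outer variable $x_{j+1}$ and inner variable $x_j$ (and the remaining $x_i$ as fixed parameters) then collapses the two relevant integrations into a single one:
\begin{equation*}
\nu\mleft(x_{j+1}\mapsto\nu\mleft(x_j\mapsto\tilde\Psi(x_j+x_{j+1},\,x_{j+2},\ldots,x_{k+1})\mright)\mright) \;=\; \nu\mleft(u\mapsto\tilde\Psi(u,\,x_{j+2},\ldots,x_{k+1})\mright).
\end{equation*}
After relabelling the surviving $k$ integration variables as $u_1,\ldots,u_k$, the remaining tower outside this collapse produces exactly the closed form above for $\nu_k(g)$, finishing the identification.

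I expect the only real---and rather mild---obstacle to be bookkeeping: verifying that the factorisation through $x_j+x_{j+1}$ genuinely survives the prior partial integration over $x_1,\ldots,x_{j-1}$. This is transparent since those inner integrations treat $(x_j,\ldots,x_{k+1})$ as fixed parameters, so every intermediate function inherits the same factorisation from the original integrand. Crucially, no form of non-commutative Fubini is invoked: every iterated mean is computed in the same inside-out order by which $\nu_{k+1}$ and $\nu_k$ were defined, and idempotence is used exactly once, in the interior of the tower, to absorb the pair $x_j,x_{j+1}$ into a single variable.
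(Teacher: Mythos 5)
Your proof is correct and rests on the same mechanism as the paper's: unfold $\nu_{k+1}$ into a tower of iterated means, observe that composing with $\alpha_j$ makes the integrand depend on $x_j,x_{j+1}$ only through their sum, and absorb that pair using idempotence~\eqref{eq:idem} exactly once. The paper reaches the same point by induction on $k$ (proving $j=k+1$ and $j=k$ directly, then pushing $j\leq k-1$ down via the identity $(f\circ\alpha_j)^x=f^x\circ\alpha_j$), whereas you make the recursion explicit via the closed-form tower and apply idempotence in place; the two are the same computation differently organized, with yours slightly streamlined in that it treats all $1\leq j\leq k$ in one stroke and makes visible that no Fubini-type interchange is ever invoked.
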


\begin{remark}
In \Cref{prop:no_random_walk} we show that if $\nu$ is a \textit{countably additive} probability measure on $\NN$ and we define the random walk measures $\nu_k$ as above, then $\alpha_1\nu_{k+1}$ has total variation distance at least $1/4$ from $\nu_k$. As such, \Cref{thm:omega-invariant} establishes a property of certain ``finitely additive random walks'' that cannot hold \emph{even approximately} for true random walks. This apparent paradox is not contradictory since the Arens convolution $(\mu,\nu)\mapsto \mu*\nu$ and the operation sending $\nu$ to $\nu_k$ defined above are not continuous  on the space of means.
\end{remark}

\begin{proof}[Proof of \Cref{thm:omega-invariant}]
We first show that for every $k \geq 1$ it holds that $\alpha_{k+1}\nu_{k+1} = \nu_k$. Let $f\in\ell^\infty( \incr^k)$. We have
\begin{equation*}
(\alpha_{k+1} \nu_{k+1})(f) = \nu_{k+1} (f\circ \alpha_{k+1}) = \nu \mleft( x \mapsto \nu_{k} \mleft(  (f\circ \alpha_{k+1})^x \mright) \mright).
\end{equation*}
Observe that $(f\circ \alpha_{k+1}) ^x = f$ holds by definition; adding a new entry at the end of a sequence and then deleting it leaves a sequence unchanged. Therefore,
\begin{equation*}
(\alpha_{k+1} \nu_{k+1})(f) = \nu \mleft( x \mapsto \nu_{k} (f) \mright) = \nu_{k} (f)
\end{equation*}
as desired.

Next, we show that for every $k \geq 1$ it holds that $\alpha_{k}\nu_{k+1}=\nu_k$. Again let  $f\in\ell^\infty( \incr^k)$. In the special case $k=1$, $f\in\ell^\infty( \incr^1)$, and we note that $(f\circ \alpha_1) ^x(y_1) = f(y_1 + x)$. Therefore, using the idempotence condition~\eqref{eq:idem},
\begin{equation*}
(\alpha_1 \nu_{2})(f) = \nu \mleft( x \mapsto \nu\mleft(y_1 \mapsto  f(y_1+x) \mright)  \mright) = \nu (f) = \nu_1(f).
\end{equation*}

If $k\geq 2$, we expand twice as above:
\begin{multline*}
(\alpha_k \nu_{k+1})(f)  = \nu \mleft( x \mapsto \nu_{k} \mleft(  (f\circ \alpha_k)^x \mright) \mright)  = \\
=\nu \mleft( x \mapsto  \nu \mleft( x' \mapsto\nu_{k-1} \mleft(  ((f\circ \alpha_k)^x)^{x'} \mright)  \mright)  \mright).
\end{multline*}
Now we claim $((f\circ \alpha_k)^x)^{x'} = f^{x+x'}$. Indeed, given $y_1 < \cdots < y_{k-1}$,
\begin{multline*}
((f\circ \alpha_k)^x)^{x'} (y_1, \ldots , y_{k-1}) = (f\circ \alpha_k)^x (y_1, \ldots , y_{k-1}, y_{k-1}+x') = \\ 
= (f\circ \alpha_k)(y_1, \ldots , y_{k-1},  y_{k-1}+ x',  y_{k-1}+ x'+x ) = f(y_1, \ldots , y_{k-1},  y_{k-1}+ x'+x ).
\end{multline*}
Therefore
\begin{equation*}
(\alpha_k \nu_{k+1})(f)  = \nu \mleft( x \mapsto  \nu \mleft( x' \mapsto\nu_{k-1} \mleft(  f^{x+x'} \mright)  \mright)  \mright) 
\end{equation*}
which, using idempotence again, is
\begin{equation*}
\nu \mleft( x \mapsto\nu_{k-1} \mleft(  f^{x} \mright)  \mright) = \nu_k(f).
\end{equation*}

We have so far shown that $\alpha_j\nu_{k+1}=\nu_k$ for all $k$ and $j \in \{k,k+1\}$; in particular that completes the proof of the case $k=1$. To complete the entire proof we need to show that the same holds for any $k \geq 2$ and $j \leq k-1$. To this end, assume by induction that this holds for $k-1$, i.e., that $\alpha_{j}\nu_{k}=\nu_{k-1}$. Given $f\in\ell^\infty( \incr^k)$, the condition $j\leq k-1$ implies $(f\circ \alpha_j) ^x = f^x \circ \alpha_j$. Therefore,
\begin{equation*}
(\alpha_j \nu_{k+1})(f)   = \nu \mleft( x \mapsto \nu_{k} \mleft(  f^x\circ \alpha_{j} \mright) \mright)  = \nu \mleft( x \mapsto (\alpha_{j} \nu_{k}) \mleft(  f^x )\mright) \mright).
\end{equation*}
By the induction hypothesis, $\alpha_j\nu_k = \nu_{k-1}$, and so this is
\begin{equation*}
\nu \mleft( x \mapsto \nu_{k-1} \mleft(  f^x\mright) \mright) = \nu_k(f).
\end{equation*}
\end{proof}

\begin{proof}[Proof of \Cref{thm:random-sets}]
We continue to identify $\incr^k$ with $\sP_k(\NN)$; thus, for $i\geq k$, we can take all the above defined $\nu_i$ to be means on $\sP_{\geq k}(\NN)$. 

Fix $k \geq 2$ and consider the Ces{\`a}ro average $\bar \nu_n = \frac{1}{n}\sum_{i=k+1}^{k+n}\nu_i$. Then Theorem~\ref{thm:omega-invariant} implies that $\bar\nu_n$  is $2/n$-close to $\alpha_i(\bar\nu_n)$ for all $i \leq k$. Let $\bar\nu$ be an accumulation point of this sequence in the space of means on $\sP_{\geq k}(\NN)$. Then $\alpha_i \bar\nu = \bar\nu$ for all $1\leq i \leq k$.

To complete the proof we show that $\bar\nu$ can be approximated by finitely supported probability measures on $\sP_{\geq k}(\NN)$. This follows from standard (but non-trivial) arguments due to Day~\cites{Day57}. Indeed, by Goldstine's theorem, there is a net $\mu_q$ of finitely supported probability distributions on  $\sP_{\geq k}(\NN)$ converging weak-* to $\bar\nu$, where $q$ ranges in some directed index set. It follows that for all $1\leq i \leq k$ we have $\lim_q (\alpha_i \mu_q - \mu_q)=0$ weakly. Here we used that the weak topology on $\ell^1$ is none other than the weak-* topology when viewed in the dual of $\ell^\infty$. Now an application of the Hahn--Banach theorem known in this situation as ``Mazur's trick'' shows that we can obtain another net $\widetilde\mu_p$, possibly indexed by another set of indices $p$, by taking each $\widetilde\mu_p$ to be a convex combination of finitely many  $\mu_q$ with $q$ arbitrarily large and such that $\lim_p (\alpha_i \widetilde\mu_p - \widetilde\mu_p)=0$ in $\ell^1$-norm for each $i$. This is exactly Day's argument in~\cite[\S5]{Day57}. Recalling that the total variation distance coincides with half the $\ell^1$-norm, any $\widetilde\mu_p$ with $p$ large enough will give us the desired model of random finite sets.
\end{proof}

\begin{remark}
Note that this proof yields the existence of deletion-almost-invariant probability measures on the finite subsets of $\NN$, but does not provide a finitary construction nor any quantitative estimates. Instead, it relies heavily on abstract arguments which require (some form of) the axiom of choice. Since the convolution operation used in the definition of the $\nu_i$ is not jointly continuous, and since Day's argument is highly non-constructive, we do not see a path to directly extract a construction from this proof. In \Cref{sec:explicit} we provide a conjectural explicit construction; one should exist at any rate by some form of Shoenfield's theorem.
\end{remark}

\section{Deleting versus filling}\label{sec:up-down}
\begin{flushright}
\begin{minipage}[t]{0.9\linewidth}\itshape\small
\begin{flushright}
Alice couldn’t help smiling as she took out her memorandum-book, and worked the sum for him. Humpty Dumpty took the book, and looked at it carefully. ``That seems to be done right---'' he began. ``You're holding it upside down!'' Alice interrupted. 
\end{flushright}
\begin{flushright}
\upshape\small
Through the Looking-Glass~\cite[Chap.~VI]{Carroll1872}
\end{flushright}
\end{minipage}
 \end{flushright}
The construction of the means $\nu_k$ in \Cref{thm:omega-invariant} relies on a first reversal, since the deletion-tolerance is obtained by walking backwards with decreasing $k$ along the ``diffuse random walk''. We now need to perform a second back-flip by essentially taking complements.

Write $\sPf(\NN)$ for the set of finite subsets of $\NN$. An element $E\in \sPf(\NN)$ is \textbf{pin-headed} if it is non-empty and $\max(E) -1 \notin E$. We write $\sPt(\NN)$ for the collection of these elements and $[a,b]$ for intervals of integers.

In this section we establish a bijection between $\sPf(\NN)$ and $\sPt(\NN)$, and study the action of the facial monoid on these sets. The usefulness of this bijection will become apparent in the next section, where both sets will be shown to be identified with the facial monoid $S$.

The following is a straightforward verification based on the definition of pin-headed.
\begin{lemma}\label{lemma:delete-fill}
The map
\begin{align*}
\sPt(\NN) &\longrightarrow \sPf(\NN)\\
E &\longmapsto [1, \max(E)] \smallsetminus E
\end{align*}
is a bijection with inverse
\begin{align*}
\sPf(\NN) &\longrightarrow \sPt(\NN)\\
\varnothing  &\longmapsto \{1\}\\
\varnothing \neq F &\longmapsto [1, \max(F) +1] \smallsetminus F.
\end{align*}\qed
\end{lemma}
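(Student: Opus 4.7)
The plan is to verify that each of the stated maps has the correct codomain and that the two are mutual inverses. Everything reduces to keeping track of maxima, and the role of the pin-headed condition is to ensure that the image of a pin-headed set under the forward map is either empty or has maximum exactly $\max(E) - 1$.

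First, I would check well-definedness. For the forward map, if $E = \{1\}$ then $[1,1] \smallsetminus \{1\} = \varnothing \in \sPf(\NN)$; if $\max(E) \geq 2$ then $\max(E) - 1 \notin E$ by the pin-headed hypothesis, so the image is a non-empty finite set contained in $[1, \max(E) - 1]$. For the inverse map, the image of $\varnothing$ is $\{1\}$, which is pin-headed (vacuously, since $0 \notin \NN$); for non-empty $F$ with $M := \max(F)$, the set $[1, M+1] \smallsetminus F$ contains $M+1$ but not $M$, hence is pin-headed with maximum $M+1$.

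Next, I would check that the compositions are the identity. Starting from $E \in \sPt(\NN)$ with $m := \max(E) \geq 2$ and setting $F := [1, m] \smallsetminus E$, the pin-headed condition gives $\max(F) = m - 1$, so the inverse formula returns $[1, m] \smallsetminus F = E$. Starting from non-empty $F \in \sPf(\NN)$ with $M := \max(F)$, the intermediate set $E := [1, M+1] \smallsetminus F$ has maximum $M + 1$, so the forward map returns $[1, M+1] \smallsetminus E = F$. The edge cases $E = \{1\} \leftrightarrow \varnothing$ are immediate from the definitions.

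There is no substantial obstacle here; the whole argument is elementary set-theoretic bookkeeping, which is precisely why the authors describe it as straightforward. The one subtlety worth flagging is that the stipulation that pin-headed sets be non-empty is exactly what makes the ad hoc assignment $\varnothing \mapsto \{1\}$ necessary on the $\sPf(\NN)$ side, in order to account for the set $\{1\}$ being sent to $\varnothing$ by the forward map.
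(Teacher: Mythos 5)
Your proof is correct and is precisely the direct verification the paper alludes to (the Lemma is stated with no written proof). You have correctly tracked the maxima, handled the edge case $E=\{1\}\leftrightarrow\varnothing$, and identified the role of the pin-headed condition in guaranteeing $\max([1,\max(E)]\smallsetminus E)=\max(E)-1$ when $\max(E)\geq 2$.
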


We now consider the transformation $\sigma_i\colon \sPf(\NN) \to  \sPf(\NN)$ which adjoins to a set $F\in \sPf(\NN)$ the $i$th smallest element of its complement $\NN \smallsetminus F$. Informally: $\sigma_i$ fills the $i$th hole. Thus for instance $\sigma_1(F) = F \cup \{\min(\NN \smallsetminus F)\}$.

We have already defined the deletion operations
\begin{equation*}
\alpha_i \colon \sP_{k}(\NN) \longrightarrow \sP_{k-1}(\NN).
\end{equation*}
Note that $\alpha_i$ gives a partially defined transformation of $\sPt(\NN)$. More precisely, writing $\sP_{\geq n}(\NN)= \bigcup_{k\geq n} \sP_{k}(\NN)$, we can consider
\begin{equation}\label{eq_alpha_i}
\alpha_i \colon \mleft( \sPt(\NN) \cap \sP_{\geq i+1}(\NN) \mright) \longrightarrow \sPt(\NN).
\end{equation}
Indeed, in applying $\alpha_i$ to sets of at least $i+1$ elements, we preserve the pin-headed condition. Moreover, this behaves well with respect to the complementation map considered in \Cref{lemma:delete-fill}. Specifically, one checks readily the following, which makes precise the heuristic that taking complements replaces deleting entries by filling holes.

\begin{lemma}\label{lem:alpha-sigma}
On the set $\sPt(\NN) \cap \sP_{\geq i+1}(\NN)$, the transformation $\alpha_i$ is intertwined with $\sigma_i$ under the bijection $\sPt(\NN) \to \sPf(\NN)$ of \Cref{lemma:delete-fill}.\qed
\end{lemma}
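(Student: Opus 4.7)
The plan is to unfold definitions and verify the identity elementwise on $\sPt(\NN) \cap \sP_{\geq i+1}(\NN)$. Write $\Phi\colon \sPt(\NN) \to \sPf(\NN)$ for the bijection $E \mapsto [1,\max(E)]\smallsetminus E$ from \Cref{lemma:delete-fill}. Given $E = \{y_1 < \cdots < y_n\} \in \sPt(\NN)$ with $n \geq i+1$, the goal is the single identity $\Phi(\alpha_i E) = \sigma_i(\Phi(E))$. Before that, I would briefly record that $\alpha_i E$ genuinely lies in $\sPt(\NN)$ so that the left-hand side is defined: since $n \geq i+1$ forces $i < n$, removing $y_i$ leaves $y_n$ as the maximum, and pin-headedness of $E$ (i.e.\ $y_n - 1 \notin E$) passes immediately to the subset $\alpha_i(E)$. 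This is the content asserted in passing around \eqref{eq_alpha_i}.

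For the main identity, I would first compute the left-hand side directly. Because the maximum is unchanged under $\alpha_i$,
\begin{equation*}
\Phi(\alpha_i E) \;=\; [1, y_n] \smallsetminus (E \smallsetminus \{y_i\}) \;=\; \bigl([1,y_n] \smallsetminus E\bigr) \cup \{y_i\} \;=\; \Phi(E) \cup \{y_i\}.
\end{equation*}
The task therefore reduces to checking that $y_i$ is precisely the $i$th smallest element of $\NN \smallsetminus \Phi(E)$, for then $\sigma_i(\Phi(E)) = \Phi(E) \cup \{y_i\}$ by definition of $\sigma_i$. For this, observe
\begin{equation*}
\NN \smallsetminus \Phi(E) \;=\; \NN \smallsetminus \bigl([1,y_n]\smallsetminus E\bigr) \;=\; E \cup [y_n+1,\infty),
\end{equation*}
whose elements listed in increasing order are $y_1 < y_2 < \cdots < y_n < y_n+1 < y_n+2 < \cdots$. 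Since $i \leq n$, the $i$th of these is $y_i$, which closes the argument.

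There is no real obstacle: the whole lemma is a careful bookkeeping of what happens under complementation inside the window $[1,\max(E)]$. The only point that demands attention is the boundary case $i = n-1$, where $\alpha_{n-1}E = \{y_1,\ldots,y_{n-2},y_n\}$ and one must check that the maximum is still $y_n$ (not $y_{n-2}$) and that pin-headedness survives; the hypothesis $n \geq i+1$ is exactly what prevents one from ever deleting the top element $y_n$, which would otherwise destroy both the maximum and the intertwining relationship with $\sigma_i$.
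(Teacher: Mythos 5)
Your proposal is correct and is exactly the ``straightforward verification'' the paper leaves to the reader: unfolding the bijection $\Phi\colon E\mapsto[1,\max(E)]\smallsetminus E$, noting that $n\geq i+1$ keeps the maximum $y_n$ intact under $\alpha_i$, computing $\Phi(\alpha_iE)=\Phi(E)\cup\{y_i\}$, and identifying $y_i$ as the $i$th element of $\NN\smallsetminus\Phi(E)=E\cup[y_n+1,\infty)$. The attention to pin-headedness being preserved and to the boundary case $i=n-1$ is appropriate and correctly resolved; nothing is missing.
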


As before, we continue to identify finite increasing sequences with finite sets for the following statement and its proof.

\begin{proposition}
For all $k\geq 1$, the mean $\nu_k$ constructed in \Cref{sec:idem} satisfies $\nu_k(\sPt(\NN))=1$. In particular, $\nu_k$ can be viewed as a mean on $\sP_k(\NN) \cap \sPt(\NN)$.
\end{proposition}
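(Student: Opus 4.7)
The plan is to proceed by direct computation from the recursive definition of $\nu_k$, exploiting two facts: that for $k \geq 2$ the pin-headed condition on $(y_1, \ldots, y_k) \in \incr^k$ depends only on the final gap $y_k - y_{k-1}$, and that the idempotent mean $\nu$ is diffuse (as noted in the paragraph preceding \Cref{thm:omega-invariant}). In particular no induction on $k$ is needed; a single application of the recursion suffices.

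First I would dispose of the case $k = 1$ trivially: for any singleton $\{y_1\}$, $\max(E) - 1 = y_1 - 1 \neq y_1$, so $\incr^1 \subseteq \sPt(\NN)$ and $\nu_1(\sPt(\NN)) = 1$.

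For $k \geq 2$, I would first unpack the pin-headed condition combinatorially: an increasing sequence $(y_1 < \cdots < y_k)$ is pin-headed iff $y_k - 1 \notin \{y_1, \ldots, y_{k-1}\}$. Because the $y_i$ are strictly increasing, the only possible index $i$ with $y_i = y_k - 1$ is $i = k-1$, so pin-headedness reduces to $y_k - y_{k-1} \geq 2$. Writing $f \in \ell^\infty(\incr^k)$ for the indicator of this condition and applying the recursion $\nu_k(f) = \nu(x \mapsto \nu_{k-1}(f^x))$ with the change of variables $y_k = y_{k-1} + x$, one obtains
\[
f^x(y_1, \ldots, y_{k-1}) = \ind{(y_{k-1}+x) - y_{k-1} \geq 2} = \ind{x \geq 2},
\]
which is constant in $(y_1, \ldots, y_{k-1})$. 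Since $\nu_{k-1}$ is normalized, $\nu_{k-1}(f^x) = \ind{x \geq 2}$ as a function of $x$, and hence $\nu_k(f) = \nu(\{x \in \NN : x \geq 2\}) = 1 - \nu(\{1\})$.

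The final step is to invoke the diffuseness of $\nu$, which gives $\nu(\{1\}) = 0$ and thus $\nu_k(f) = 1$. The ``in particular'' statement then follows from \Cref{rem:subset}. There is essentially no obstacle here; the only conceptual step is recognising that pin-headedness collapses to the single-gap condition $y_k - y_{k-1} \geq 2$, which is exactly the shape the recursive definition is built to probe via its ``last increment'' variable $x$.
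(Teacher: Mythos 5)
Your proof is correct and essentially identical to the paper's: you compute $\nu_k$ applied to the indicator of $\sPt(\NN)$ and show it equals $1-\nu(\{1\})=1$, while the paper works with the complementary indicator $f_k = \mathbbm{1}_{\sP_k(\NN)\smallsetminus\sPt(\NN)}$ and shows $\nu_k(f_k)=\nu(\{1\})=0$. The key observation in both is the same, namely that $f_k^x$ depends only on $x$ (vanishing unless $x=1$, respectively equalling $\ind{x\geq 2}$), combined with diffuseness of $\nu$.
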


\begin{proof}
Let $f_k$ be the characteristic function of the set $\sP_k(\NN) \smallsetminus \sPt(\NN)$; the statement amounts to proving $\nu_k(f_k)=0$. For $k=1$, we have $f_1 = 0$. For $k\geq 2$, we see that $f_k^x$ vanishes unless $x=1$. Therefore
\begin{equation*}
\nu_k(f_k) =  \nu \mleft( x \mapsto \nu_{k-1}\mleft( f_k^x\mright) \mright)  = 0
\end{equation*}
because $\nu(\{1\})=0$ ($\nu$ is diffuse).
\end{proof}

We can now apply \Cref{thm:omega-invariant} to deduce the following corollary. Once again, we clarify this in the spirit of \Cref{rem:subset}: a priori $\alpha_i \nu_{k+1}$ is a mean on $\sP_{k+1}(\NN)$ (since we only know that $\alpha_i(\sP_{k+1}(\NN) \cap \sPt(\NN))$ lies in $\sPt(\NN)$ if $i\leq k$), so we show the identity $\alpha_i \nu_{k+1} = \nu_{k}$ as means on $\sP_k(\NN)$.

\begin{corollary}\label{cor:alpha-invariant}
The sequence of means $\nu_k$ on $\sP_k(\NN) \cap \sPt(\NN)$ satisfies $\alpha_i \nu_{k+1} = \nu_{k}$ for all $k\geq 1$ and all $1\leq i \leq k+1$.\qed
\end{corollary}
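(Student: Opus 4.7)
The plan is to deduce this identity directly from two ingredients already at our disposal: \Cref{thm:omega-invariant}, which provides the facial relations $\alpha_i\nu_{k+1}=\nu_k$ as means on $\sP_k(\NN)$ for all $1\leq i\leq k+1$, and the preceding proposition, which tells us that each $\nu_k$ gives full mass to $\sPt(\NN)$. No new computation is required; the task is purely one of bookkeeping, translating between ``means on $\sP_k(\NN)$'' and ``means on $\sP_k(\NN)\cap\sPt(\NN)$'' via \Cref{rem:subset}.

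Concretely, I would first observe that, because $\nu_{k+1}(\sPt(\NN))=1$, one may freely regard $\nu_{k+1}$ as a mean on the ambient set $\sP_{k+1}(\NN)$. Applying \Cref{thm:omega-invariant} in that ambient setting gives the identity $\alpha_i\nu_{k+1}=\nu_k$ as means on $\sP_k(\NN)$ for every $1\leq i\leq k+1$. Next, the proposition yields $\nu_k(\sPt(\NN))=1$, so the identity just obtained forces $(\alpha_i\nu_{k+1})(\sPt(\NN))=1$ as well. Via \Cref{rem:subset}, both $\alpha_i\nu_{k+1}$ and $\nu_k$ then correspond uniquely to means on the subset $\sP_k(\NN)\cap\sPt(\NN)$, where the equality descends.

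The only point requiring a moment's thought is the case $i=k+1$, and this is where I expect the reader's attention to linger rather than any genuine obstacle. For $i\leq k$ the map $\alpha_i$ already preserves $\sPt(\NN)$ set-theoretically, as recorded in~\eqref{eq_alpha_i}, so the identity $\alpha_i\nu_{k+1}=\nu_k$ on the pin-headed subset is essentially automatic. In contrast, $\alpha_{k+1}$ may pointwise send a pin-headed set of size $k+1$ to one of size $k$ which is no longer pin-headed; the content of the corollary in this case is that such ``leakage'' has $\nu_{k+1}$-measure zero. This is not proven by any direct analysis of $\alpha_{k+1}$ but is obtained a~posteriori from the combined identity of \Cref{thm:omega-invariant} and the proposition, and it is precisely the identification of the two means on $\sP_k(\NN)$ that rules it out.
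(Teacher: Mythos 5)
Your proposal is correct and follows essentially the same route as the paper: the authors deduce the corollary directly from \Cref{thm:omega-invariant} and the preceding proposition, using \Cref{rem:subset} to move between means on $\sP_k(\NN)$ and means on $\sP_k(\NN)\cap\sPt(\NN)$, and they explicitly flag in the paragraph before the corollary that $\alpha_i$ need not preserve pin-headedness pointwise when $i=k+1$, so the identity is verified as means on $\sP_k(\NN)$ and descends by full-mass considerations. Your observation that the equality $\alpha_{k+1}\nu_{k+1}=\nu_k$ forces the ``leakage'' out of $\sPt(\NN)$ to have $\nu_{k+1}$-mean zero is precisely the content of that clarifying remark, made a little more explicit.
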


Our second reversal now gives the corresponding statement for the everywhere-defined maps $\sigma_i$.

\begin{corollary}\label{cor:sigma-equivariant}
There is a sequence of means $\mu_k$ on $\sPf(\NN)$ satisfying $\sigma_i \mu_{k+1} = \mu_{k}$ for all $k\geq 1$ and all $1\leq i \leq k$.
\end{corollary}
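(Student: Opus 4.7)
The plan is to transport \Cref{cor:alpha-invariant} through the bijection $b\colon \sPt(\NN) \to \sPf(\NN)$ of \Cref{lemma:delete-fill}. Since the proposition immediately preceding the statement shows that $\nu_{k+1}$ is concentrated on $\sP_{k+1}(\NN) \cap \sPt(\NN)$, the pushforward $b_* \nu_{k+1}$ is a well-defined mean on $\sPf(\NN)$ (in the sense of \Cref{rem:subset}, regarded as supported on the subset $b(\sP_{k+1}(\NN) \cap \sPt(\NN)) \subseteq \sPf(\NN)$). I would simply set $\mu_k := b_* \nu_{k+1}$ for every $k \geq 1$.

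To verify the identity $\sigma_i \mu_{k+1} = \mu_k$ for $1 \leq i \leq k$, I would invoke \Cref{lem:alpha-sigma}, which asserts that $b$ intertwines $\alpha_i$ with $\sigma_i$ on $\sPt(\NN) \cap \sP_{\geq i+1}(\NN)$. The mean $\nu_{k+2}$ is supported on $\sP_{k+2}(\NN) \cap \sPt(\NN)$, and $k+2 \geq i+1$ whenever $i \leq k+1$, so the intertwining applies and yields $\sigma_i b_* \nu_{k+2} = b_* \alpha_i \nu_{k+2}$. Then \Cref{cor:alpha-invariant}, applied with $k+1$ in place of $k$, gives $\alpha_i \nu_{k+2} = \nu_{k+1}$, and hence $\sigma_i \mu_{k+1} = b_* \nu_{k+1} = \mu_k$, as desired.

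There is no real obstacle: the content of the statement is wholly absorbed by the two previous lemmas, and the argument is essentially a one-line chase through definitions. The only point requiring any care is the bookkeeping of index ranges. The target statement is restricted to $i \leq k$, which is strictly weaker than the range $i \leq k+1$ appearing in \Cref{cor:alpha-invariant}; this restriction is not wasteful, since the case $i = k+1$ (deletion of the largest entry) generally destroys pin-headedness, so the corresponding $\sigma_i$-image fails to lie in the image of $b$ and the relation cannot be transported through the bijection.
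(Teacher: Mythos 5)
Your argument is correct and follows essentially the same route as the paper: push $\nu_\bullet$ forward through the bijection of \Cref{lemma:delete-fill} and apply \Cref{lem:alpha-sigma} together with \Cref{cor:alpha-invariant}. The only cosmetic difference is an off-by-one shift: the paper simply sets $\mu_k := b_*\nu_k$, whereas you set $\mu_k := b_*\nu_{k+1}$; both choices work.

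Your closing remark about the index range, however, is not right, and in fact does not fit your own construction. With $\mu_{k+1} = b_*\nu_{k+2}$ supported on $\sP_{k+2}(\NN)\cap\sPt(\NN)$, \Cref{lem:alpha-sigma} applies for every $i\leq k+1$ (since $k+2\geq i+1$), so your $\mu_k$ actually satisfy $\sigma_i\mu_{k+1}=\mu_k$ for all $i\leq k+1$, strictly more than what is claimed. Moreover, on sets of size $k+2$ the map $\alpha_{k+1}$ deletes the \emph{second}-largest element, which preserves pin-headedness; it is $\alpha_{k+2}$, deletion of the largest, that can destroy it. It is with the paper's choice $\mu_k=b_*\nu_k$ that $i\leq k$ is exactly the range in which $\nu_{k+1}$ is supported inside $\sPt(\NN)\cap\sP_{\geq i+1}(\NN)$ and the intertwining can be invoked.
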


\begin{proof}
Define $\mu_k$ to be the image of the mean $\nu_k$ of \Cref{cor:alpha-invariant} under the bijection $\sPt(\NN) \to \sPf(\NN)$ of \Cref{lemma:delete-fill}. Then \Cref{lem:alpha-sigma} shows that  $\sigma_i \mu_{k+1} = \mu_{k}$ holds indeed.
\end{proof}

We can now record the main result of this section.

\begin{corollary}\label{cor:sigma-invariant}
There is a mean $\mu$ on $\sPf(\NN)$ satisfying $\sigma_i \mu = \mu$ for all $i\geq 1$.
\end{corollary}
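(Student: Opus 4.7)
The plan is to produce the invariant mean $\mu$ by Ces\`aro-averaging the sequence $(\mu_k)$ provided by Corollary~\ref{cor:sigma-equivariant} and then passing to a weak-* accumulation point in the compact space of means on $\sPf(\NN)$. The relation $\sigma_i\mu_{k+1}=\mu_k$ (valid for $i\leq k$) is the telescoping input that makes the averaging work: applying $\sigma_i$ shifts the index $k$ down by one, so averages should be nearly $\sigma_i$-invariant once $n$ is large compared to $i$.

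Concretely, set $\bar\mu_n := \frac{1}{n}\sum_{k=1}^{n}\mu_k$. For fixed $i\geq 1$ and $n>i$, split the sum at $k=i$: for $k\geq i+1$ we have $\sigma_i\mu_k=\mu_{k-1}$, so
\begin{equation*}
\sigma_i\bar\mu_n \;=\; \frac{1}{n}\sum_{k=1}^{i}\sigma_i\mu_k \;+\; \frac{1}{n}\sum_{k=i}^{n-1}\mu_k,
\end{equation*}
and a direct comparison with $\bar\mu_n$ gives
\begin{equation*}
\sigma_i\bar\mu_n - \bar\mu_n \;=\; \frac{1}{n}\left(\sum_{k=1}^{i}\sigma_i\mu_k \;-\; \sum_{k=1}^{i-1}\mu_k \;-\; \mu_n\right).
\end{equation*}
Each term on the right is a mean, so this difference has total variation at most $(2i+1)/n$, which tends to $0$ as $n\to\infty$ with $i$ fixed.

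Now invoke Banach--Alaoglu: the space of means on $\sPf(\NN)$, viewed as a weak-* closed subset of the unit ball of $\ell^\infty(\sPf(\NN))^*$, is compact, so the sequence $(\bar\mu_n)$ has a weak-* accumulation point $\mu$. The pushforward $\nu\mapsto\sigma_i\nu$ is weak-* continuous because it is the adjoint of the bounded operator $f\mapsto f\circ\sigma_i$ on $\ell^\infty(\sPf(\NN))$. Therefore $\sigma_i\mu-\mu$ is a weak-* accumulation point of $\sigma_i\bar\mu_n-\bar\mu_n$; but the latter converges to $0$ in norm and hence in weak-*, forcing $\sigma_i\mu=\mu$ for every $i\geq 1$.

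There is no real obstacle here: this is a standard amenability-style passage from an asymptotically invariant sequence to an exactly invariant mean, and the only subtle point is verifying weak-* continuity of pushforward by $\sigma_i$, which is automatic since $\sigma_i$ is just a set map. The substantive content of the corollary has already been done in Corollary~\ref{cor:sigma-equivariant} (and, ultimately, in Theorem~\ref{thm:omega-invariant}).
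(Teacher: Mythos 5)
Your proof is correct and follows exactly the same route as the paper's (Cesàro average $\bar\mu_n = \frac{1}{n}\sum_{k=1}^n\mu_k$ followed by passage to a weak-* accumulation point); the paper simply states this in one sentence, whereas you spell out the telescoping estimate and the weak-* continuity of pushforward by $\sigma_i$. Nothing to change.
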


\begin{proof}
Consider the sequence $\bar \mu_n$ of means defined by the Ces{\`a}ro averages $\bar \mu_n = \frac1n \sum_{k=1}^n \mu_k$, where $\mu_k$ is given by \Cref{cor:sigma-equivariant}. Then any accumulation point $\mu$ of this sequence in the compact space of means has the required property.
\end{proof}

In the diffuse spirit of \Cref{sec:idem}, we could have replaced the above use of Ces{\`a}ro averages and accumulation points by one more Arens product, namely the mean-integration of the various $\bar \mu_n$ over $n$ for some invariant mean on $\NN$.

\section{The facial monoid}\label{sec:facial}
Consider the monoid presentation
\begin{equation*}
S = \big\langle s_i : s_j s_i = s_i s_{j+1} \ \forall\, 1\leq i \leq j  \big\rangle.
\end{equation*}
Thus $S$ is the monoid of all possible words (=finite sequences) on the letters $s_i$ (with $i\in\NN$), quotiented by the equivalence relation generated by the relations $s_j s_i = s_i s_{j+1}$ applied to any substring of such words. There would be no substantial difference whatsoever working with semigroups rather than monoids; the only difference is that we allow the empty word $e$, which provides an identity element for $S$.

In order to manipulate elements of $S$ it is crucial that this presentation admits normal forms. We shall say that a word is a \textbf{flat normal form} if it contains no substring $s_j s_i $ with $j\geq i$. The terminology reflects the fact that in particular no proper power $s_i^{p>1}$ can occur; thus, any flat normal form is either empty, or a single letter, or of the form $s_{i_1} \cdots s_{i_n}$ with $i_1 < \cdots < i_n$. In conclusion, flat normal forms can be identified with $\sPf(\NN)$.

The following is an elementary instance of the theory of string rewriting systems.

\begin{lemma}\label{lem:flat-normal}
Every element of the monoid $S$ admits a unique representative in flat normal form.
\end{lemma}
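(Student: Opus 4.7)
I propose to split the argument into existence and uniqueness of the flat normal form.

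For \textbf{existence}, the plan is to induct on the word length $n$. Given $s_{i_1}\cdots s_{i_n}$ with $n\geq 2$, the inductive hypothesis produces a flat normal form $s_{j_1}\cdots s_{j_{n-1}}$ with $j_1<\cdots<j_{n-1}$ for the suffix. It then remains to insert the head letter $s_{i_1}$ into this increasing tail. If $i_1<j_1$ we are done; otherwise apply the relation $s_{i_1}s_{j_1}=s_{j_1}s_{i_1+1}$ and iterate. After $k$ such swaps the word reads
\begin{equation*}
s_{j_1}\cdots s_{j_k}\,s_{i_1+k}\,s_{j_{k+1}}\cdots s_{j_{n-1}},
\end{equation*}
and either $i_1+k<j_{k+1}$ (so the result is in flat normal form) or we apply the relation once more. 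The procedure must halt in at most $n-1$ steps, at the latest when the wandering letter has reached the end of the word.

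For \textbf{uniqueness}, I will exploit the concrete transformations $\sigma_i\colon \sPf(\NN)\to\sPf(\NN)$ introduced in \Cref{sec:up-down}. A direct check shows they obey the defining relations: if $a_1<a_2<\cdots$ enumerates $\NN\smallsetminus F$, then for $i\leq j$,
\begin{equation*}
\sigma_j\sigma_i(F) \;=\; F\cup\{a_i,a_{j+1}\} \;=\; \sigma_i\sigma_{j+1}(F).
\end{equation*}
The universal property of the monoid presentation thus yields a homomorphism $\rho\colon S\to\End(\sPf(\NN))$. Evaluating $\rho(s_{i_1}\cdots s_{i_n})$ on $\varnothing$ for a flat normal form (so $i_1<\cdots<i_n$) and applying the factors from right to left, a short induction shows the output is exactly $\{i_1,\ldots,i_n\}$: at each step the new index is strictly smaller than everything already added, so the relevant $\sigma_{i_k}$ simply adjoins $i_k$. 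Hence distinct flat normal forms produce distinct subsets, and therefore distinct elements of $S$.

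The main obstacle I anticipate is the existence step. A naive strategy of reducing an arbitrary bad pair $s_j s_i$ ($j\geq i$) wherever it occurs yields a string rewriting system whose termination is not transparent: each application of the rule inflates an index by $1$, so neither lexicographic nor size-based orders on the word decrease. Restricting instead to the insertion-sort reduction above sidesteps the difficulty, because a single wandering marker advances monotonically through a word of fixed length.
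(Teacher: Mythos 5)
Your proof is correct, and it takes a genuinely different route from the paper's. The paper invokes Newman's Lemma on the rewriting system $s_j s_i \to s_i s_{j+1}$ ($i\leq j$), checking local confluence on the only critical overlaps $s_k s_j s_i$ with $i\leq j\leq k$ and termination via a cleverly chosen well-founded order, namely lexicographic on the inversion counts $h_j = |\{k>j : i_k \leq i_j\}|$; confluence plus termination then deliver existence and uniqueness at one stroke, and the same framework is recycled for the descending normal forms in \Cref{lem:monoid-inj}. You instead split the two halves. For existence, your insertion-sort reduction is a clean way around the termination subtlety you correctly flag: the naive orders fail (e.g.\ $s_j s_j \to s_j s_{j+1}$ increases both the index sum and the lexicographic rank of the index sequence), whereas a single wandering marker advancing through a word of fixed length halts for trivial reasons, and your observation $j_k \leq i_1+k-1 < i_1+k < j_{k+1}$ is exactly what guarantees the output is strictly increasing. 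For uniqueness, building the representation $\rho\colon S\to\End(\sPf(\NN))$ from the hole-filling maps $\sigma_i$ and evaluating on $\varnothing$ is a pleasant semantic argument; note that in verifying $\sigma_j\sigma_i=\sigma_i\sigma_{j+1}$ and computing $\rho(s_{i_1}\cdots s_{i_n})(\varnothing)=\{i_1,\ldots,i_n\}$, you are essentially proving \Cref{lem:S-fin} ahead of time, by hand. Your version is more constructive and elementary; the paper's rewriting-theoretic proof is more uniform, paying off when the analogous statement must be proved for the second normal form.
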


\begin{proof}
We consider the string rewriting system defined by the following rules:
\begin{equation*}
s_j s_i \to s_i s_{j+1} \kern5mm \forall\, 1\leq i \leq j.
\end{equation*}
The statement is now a very simple case of Newman's Lemma~\cite{Newman} (for which a more streamlined reference is~\cite[Lem.~2.4]{Huet}). Specifically, according to Newman's Lemma it suffices to show that the system is both terminating and locally confluent. 

\textbf{Local confluence}, also called the weak Church--Rosser property, means that for all $w, w', w''$ with $w\to w'$ and $w \to w''$ there exists $v$ with $w' \to^* v$ and $w'' \to^* v$. Here the symbol $\to$ denotes an application of a rewriting rule to any substring of the given string and $\to^*$ is the reflexive transitive closure of $\to$. Moreover, it suffices to check the case where $w\to w'$ and $w \to w''$ are applications of rewriting rules to substrings of $w$ that overlap. The only such overlaps are of the form $s_k s_j s_i$ when $i\leq j \leq k$. We then check
\begin{equation*}
\xymatrix @R-2pc { & s_k s_i s_{j+1} \ar[r] &  s_i  s_{k+1}s_{j+1} \ar[dr] & \\
s_k s_j s_i \ar[ur] \ar[dr]&&&  s_i  s_{j+1}s_{k+2}\\
 & s_j s_{k+1} s_i  \ar[r] &  s_j  s_i s_{k+2}\ar[ur] &}
\end{equation*}
as desired.

As to \textbf{terminating}, this means that there is no (finite) string admitting an infinite sequence of successive applications of the rules to its substrings. To verify this, we associate to any string $s_{i_1} \cdots s_{i_n}$ (of length $n\geq 2$) the sequence of $n-1$ integers $h_1, \ldots, h_{n-1} \geq 0$ defined by $h_j = |\{ k : k>j \text{ and } i_k \leq i_j\}|$. Then any application of the rules will decrease the lexicographic order on the corresponding sequences of integers. This implies termination since the lexicographic order is well-founded.
\end{proof}

\Cref{lem:flat-normal} allows us to identify $S$ with $\sPf(\NN)$; next, we need to keep track of the monoid structure in the model $\sPf(\NN)$.

\begin{lemma}\label{lem:S-fin}
Left multiplication by $s_i$ corresponds to the hole-filling operation $\sigma_i$ of \Cref{sec:up-down}.
\end{lemma}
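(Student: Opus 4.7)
My plan is to prove the identification by induction on $|F|$, where $F\in\sPf(\NN)$ corresponds via \Cref{lem:flat-normal} to the word $w=s_{i_1}\cdots s_{i_n}$ with $i_1<\cdots<i_n$. Throughout, I will write $h_j(F)$ for the $j$th smallest element of $\NN\smallsetminus F$, so that $\sigma_j(F)=F\cup\{h_j(F)\}$. The base case $n=0$ is immediate: $s_i\cdot e=s_i$ is already in flat normal form and encodes $\{i\}=\sigma_i(\varnothing)$.

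For the inductive step I would split on whether $i<i_1$ or $i\geq i_1$. If $i<i_1$, then $s_is_{i_1}\cdots s_{i_n}$ is already in flat normal form, so it encodes $F\cup\{i\}$; since the first holes of $F$ are $1,2,\ldots,i_1-1$, one has $h_i(F)=i$, matching $\sigma_i(F)$. If instead $i\geq i_1$, I would apply the defining relation once to rewrite $s_iw$ as $s_{i_1}\cdot(s_{i+1}\cdot w')$, where $w'$ represents $F':=F\smallsetminus\{i_1\}$. By induction applied to the strictly smaller set $F'$, the subword $s_{i+1}\cdot w'$ reduces to the flat normal form of $\sigma_{i+1}(F')$. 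A brief hole-counting check shows that the holes of $F'$ are precisely those of $F$ together with $i_1$, which shifts the indexing by one past the first $i_1-1$ entries and in particular gives $h_{i+1}(F')=h_i(F)>i_1$. Hence every element of $\sigma_{i+1}(F')$ exceeds $i_1$, so left-multiplying by $s_{i_1}$ falls into the easy case above, yielding $\{i_1\}\cup\sigma_{i+1}(F')=F\cup\{h_i(F)\}=\sigma_i(F)$, as required.

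The main point---really the only content---is the hole-counting identity $h_{i+1}(F')=h_i(F)$ for $i\geq i_1$; everything else is routine bookkeeping about a single application of the rewriting rule $s_is_{i_1}\to s_{i_1}s_{i+1}$ combined with the induction hypothesis. Conceptually, this identity expresses exactly the duality between ``deleting the $i_1$th entry'' and ``filling the $i_1$th hole'' that motivated \Cref{sec:up-down}, which is why the rewriting rule of $S$ is perfectly matched to the combinatorics of $\sigma_i$.
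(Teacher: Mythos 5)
Your proof is correct and is essentially the direct verification on flat normal forms that the paper has in mind (the paper's proof is just the one-line assertion that this is a direct check). The induction on $|F|$, the split on $i < i_1$ versus $i \geq i_1$, and in particular the hole-counting identity $h_{i+1}(F\smallsetminus\{i_1\}) = h_i(F)$ for $i\geq i_1$ are exactly the bookkeeping that makes the ``direct verification'' go through.
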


\begin{proof}
On flat normal forms, this is a direct verification.
\end{proof}

We record another consequence of the flat normal forms.

\begin{lemma}\label{lem:right-cancel}
$S$ is right cancelable; that is, for every $s, t, x\in S$ we have $sx=tx \Rightarrow s=t$.
\end{lemma}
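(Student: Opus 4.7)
My approach is to realise right multiplication by any element $x\in S$ explicitly as a map $\sPf(\NN)\to\sPf(\NN)$ under the flat-normal-form identification of \Cref{lem:flat-normal}, and then read off injectivity directly.

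First I would work out right multiplication by a single generator. A short calculation using the relation $s_j s_i \to s_i s_{j+1}$ to bubble a trailing $s_i$ leftwards through a flat word suggests that right multiplication by $s_i$ realises the map $\rho_i : \sPf(\NN) \to \sPf(\NN)$ given by
\begin{equation*}
\rho_i(F) \;=\; (F \cap [1, i-1]) \,\cup\, \{i\} \,\cup\, \{a + 1 : a \in F,\ a \geq i\};
\end{equation*}
that is, $\rho_i$ leaves fixed the elements of $F$ that are below $i$, shifts those that are at least $i$ up by one, and inserts $i$. I would confirm this formula by induction on $|F|$: the cases $F=\varnothing$ and $\max(F) < i$ are immediate, while $\max(F) \geq i$ requires one application of the relation to the trailing pair $s_{\max F}\, s_i$, followed by the inductive hypothesis applied to $F \setminus \{\max F\}$, together with a check that appending $s_{\max(F)+1}$ at the end preserves flat form.

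Granted this formula, $\rho_i$ is plainly injective: since $i$ always lies in $\rho_i(F)$, the map $G \mapsto (G \cap [1,i-1]) \cup \{b-1 : b \in G,\ b > i\}$ recovers $F$ from $\rho_i(F)$. For an arbitrary $x\in S$ with word representative $s_{i_1} \cdots s_{i_n}$, associativity identifies right multiplication by $x$ with the composition $\rho_{i_n} \circ \cdots \circ \rho_{i_1}$, and a composition of injections is injective---this is right cancelability. The only delicate step is pinning down the formula for $\rho_i$; the rest of the argument is routine.
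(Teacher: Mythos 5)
Your proposal is correct and fills in exactly the ``direct observation'' that the paper's one-line proof leaves to the reader: the paper likewise reduces to $x = s_i$ and then appeals to flat normal forms, while you make this concrete by exhibiting the formula for right multiplication by $s_i$ and checking it has an explicit left inverse. I verified the formula and the inductive argument: in the case $\max(F)\geq i$, one indeed needs $\max(\rho_i(F\setminus\{\max F\}))\leq\max(F)$ to see that appending $s_{\max(F)+1}$ stays flat, and this holds because $\rho_i$ increases the maximum by at most one (and the next-largest element of $F$ is strictly below $\max F$). Same approach, just written out.
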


\begin{proof}
It suffices to show this for $x=s_i$ for all $i$; it is then a direct observation using the flat normal forms of $s, t$.
\end{proof}

Consider the sub-monoid $S_n$ of $S$ generated by the first $n$ generators $s_1, \ldots , s_n$. Of course the relations $s_j s_i = s_i s_{j+1}$ still hold in $S_n$ when $1\leq i \leq j \leq n-1$; the content of \Cref{lem:monoid-inj} below is that $S_n$ is not subject to any further ``hidden'' relations imposed by $S$. The reason this is not immediately obvious is that the flat normal forms are not well suited to $S_n$. For instance, we have
\begin{equation*}
s_j s_n s_i = s_i s_n s_{j+1}
\end{equation*}
but these two words are not confluent when the rewriting rules used to prove \Cref{lem:flat-normal} are restricted to $S_n$.

\begin{lemma}\label{lem:monoid-inj}
The canonical homomorphism
\begin{equation*}
\big\langle s_i, 1\leq i \leq n : s_j s_i = s_i s_{j+1} \ \forall\,  i \leq j \leq n-1 \big\rangle \longrightarrow S_n
\end{equation*}
is injective.
\end{lemma}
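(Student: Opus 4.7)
The plan is to introduce a string rewriting system whose rules stay within the alphabet $\{s_1, \ldots, s_n\}$, and to extract enough structure from its normal forms to force injectivity of the canonical homomorphism — crucially, without needing to establish confluence. The key observation is that the defining relation $s_j s_i = s_i s_{j+1}$ (for $1 \leq i \leq j \leq n-1$) is equivalent, after setting $k = j+1$, to $s_i s_k = s_{k-1} s_i$ for $1 \leq i < k \leq n$; orienting this gives the rewriting rule
\[
s_i s_k \ \longrightarrow\ s_{k-1} s_i \qquad (1 \leq i < k \leq n).
\]
This rule preserves the alphabet $\{s_1, \ldots, s_n\}$, uses only defining relations of the source monoid, and strictly decreases the sum of the indices in the word (from $i+k$ to $i + (k-1)$). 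Termination is therefore automatic, and every word reduces to some \emph{normal form}, which is precisely a word $s_{k_p} s_{k_{p-1}} \cdots s_{k_1}$ whose indices are weakly decreasing left-to-right, i.e., $1 \leq k_1 \leq k_2 \leq \cdots \leq k_p \leq n$.

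Next I would compute the image in $S$ of such a normal form, using the bijection $S \cong \sPf(\NN)$ from \Cref{lem:S-fin} that turns left-multiplication by $s_i$ into the hole-filling operation $\sigma_i$. By induction on $p$, and exploiting $k_{j+1} \geq k_j$ to guarantee that $\sigma_{k_{j+1}}$ inserts an element strictly above the current maximum, one obtains the closed formula
\[
(s_{k_p} \cdots s_{k_1}) \cdot \varnothing \ = \ \{k_1,\ k_2 + 1,\ k_3 + 2,\ \ldots,\ k_p + (p-1)\}.
\]
The map $(k_1, \ldots, k_p) \longmapsto \{k_j + j - 1 : 1 \leq j \leq p\}$ from weakly-increasing tuples to $\sPf(\NN)$ is then manifestly injective: the tuple is recovered from the image set $\{y_1 < \cdots < y_p\}$ by $k_j = y_j - j + 1$.

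The conclusion follows without invoking confluence. Given two words $w, w'$ of the source monoid mapping to the same element of $S_n \subseteq S$, reduce each to some normal form $u, u'$; these represent the same element as $w, w'$, so they have the same image in $\sPf(\NN)$, hence the associated weakly-increasing tuples coincide, hence $u = u'$ as words, and therefore $w = w'$ in the source monoid. I expect the main technical obstacle to be the inductive verification of the closed formula: everything rests on $k_{j+1} \geq k_j$ forcing the newly inserted element above the running maximum, so that the hole-counting becomes transparent. A conceptually pleasant side-effect is that confluence of the rewriting is unnecessary; only the \emph{existence} of a normal form per equivalence class is used, not its \emph{uniqueness} — which sidesteps the critical-pair issue that would otherwise appear at the boundary index $s_n$.
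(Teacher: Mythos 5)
Your proof is correct and takes a genuinely different route from the paper's. Both use the descending rewriting rule $s_i s_k \to s_{k-1} s_i$ (for $1 \leq i < k \leq n$) with termination by decreasing index-sum, but they diverge afterward. The paper proceeds via Newman's lemma: it checks local confluence by a critical-pair diagram for $s_i s_j s_k$, observes that these computations never leave the alphabet $\{s_1, \ldots, s_n\}$ because the reversed rules only decrease indices, and so obtains unique descending normal forms simultaneously in $\widetilde S_n$ and $S$, realizing $\widetilde S_n$ as the subset $S_n$. Your argument instead drops confluence entirely and replaces it with a computation: you take \emph{some} descending normal form of each word, compute its image in $\sPf(\NN)$ via iterated hole-filling starting from $\varnothing$, and observe that the resulting formula $\{k_j + j - 1 : 1 \leq j \leq p\}$ is manifestly injective on weakly-increasing tuples $(k_1,\ldots,k_p)$. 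The inductive step of the formula is correct: with $k_{j+1} \ge k_j$ the candidate $k_{j+1}+j$ strictly exceeds the running maximum $k_j + (j-1)$, so it lies in the complement and is counted as the $k_{j+1}$-th hole. Your route buys a cleaner dependency structure (only existence of normal forms, no Newman's lemma) at the cost of leaning on \Cref{lem:S-fin} and the identification $S \cong \sPf(\NN)$; the paper's route is self-contained within rewriting theory and delivers the stronger conclusion of \emph{unique} normal forms for $\widetilde S_n$. One small inaccuracy in your closing remark: there is actually no critical-pair problem at the boundary index $s_n$ for the \emph{descending} system, precisely because its rules never increase indices --- the confluence obstruction the paper mentions arises only for the forward (flat normal form) rules. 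So confluence would have been available to you here; your argument is valuable for avoiding the need to check it, not for escaping an obstruction.
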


\begin{proof}

We denote by $\widetilde S_n$ the monoid given by the presentation on the left hand side of the statement. We consider the rewriting system for $S$ with the opposite of the rules used before, as follows:
\begin{equation*}
s_i s_j \to s_{j-1} s_i \kern5mm \forall\, 1\leq i < j.
\end{equation*}
It is terminating because these rules decrease the sum of all indices appearing in a string. The system is locally confluent by virtually the same verification as for flat normal forms. Namely, the only case to check arises for substrings of the form $s_i s_j s_k$ with $1\leq i < j < k$:
\begin{equation*}
\xymatrix @R-2pc { & s_i s_{k-1} s_j \ar[r] &  s_{k-2}s_i  s_j \ar[dr] & \\
s_i s_j s_k \ar[ur] \ar[dr]&&&  s_{k-2} s_{j-1} s_i\\
 & s_{j-1} s_i s_k \ar[r] &  s_{j-1}s_{k-1} s_i \ar[ur] &}
\end{equation*}
Thus there is a unique normal form, which we call the \textbf{descending normal form} because it consists of all words $s_{i_1} \cdots s_{i_n}$ with $i_1 \geq \cdots \geq i_n$. In contrast to the flat normal form, powers do occur, i.e. the sequence of indices is not always strictly decreasing.

The most important difference, however, is that the proof of confluence holds unchanged if we restrict the generators and the relations to $i,j\leq n$, because the indices never increase under the application of these reversed rules.

Therefore, $\widetilde S_n$ also admits unique descending normal forms for every element. This realises $\widetilde S_n$ as a subset of $S$, namely $S_n$.
\end{proof}

\begin{proposition}\label{prop:Sn}
For every $n\geq 2$, the sub-monoid $S_n < S$ is non-amenable.
\end{proposition}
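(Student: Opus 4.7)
The plan is to exhibit a continuous affine action of $S_n$ on a non-trivial compact convex set with no common fixed point; since the relevant notion of amenability here is equivalent to the fixed-point property for such actions, this forces $S_n$ to be non-amenable. The construction is the one already foreshadowed in the discussion following \Cref{thm:fixed-pt}: take any compact convex $K$ with two distinct points $x\neq y$ (say $K=[0,1]$ with $x=0$, $y=1$), define $T_i\colon K\to K$ to be the constant map at $x$ for $1\leq i\leq n-1$, and let $T_n$ be the constant map at $y$. These maps are trivially continuous and affine.

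First I would verify the required relations $T_j T_i = T_i T_{j+1}$ for all $1\leq i\leq j\leq n-1$. Both sides are the constant map at $x$: the left-hand side because $T_j$ is constant at $x$ (as $j\leq n-1$); the right-hand side because $T_{j+1}$ has value either $x$ (if $j+1\leq n-1$) or $y$ (if $j+1=n$), and in either case $T_i$ sends that value to $x$ (since $i\leq n-1$). The decisive step is then to upgrade the assignment $s_i\mapsto T_i$ into an honest monoid homomorphism $S_n\to\End(K)$. Once this is in place, a common fixed point $z$ for the resulting action would have to satisfy both $z=T_n(z)=y$ and $z=T_1(z)=x$, contradicting $x\neq y$, and we are done.

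The main obstacle---indeed the only point where anything beyond a routine check enters---is exactly this upgrade. A priori the inclusion $S_n<S$ could force our $T_i$ to satisfy additional relations inherited from the infinitely many face relations available in $S$; the paper itself flags this concern immediately before \Cref{lem:monoid-inj}, noting that the flat normal forms of $S$ are ill-suited to $S_n$. Fortunately, this is precisely what \Cref{lem:monoid-inj} resolves: $S_n$ admits no hidden relations beyond $s_j s_i = s_i s_{j+1}$ with $1\leq i\leq j\leq n-1$, so the verifications above are enough to produce a well-defined monoid homomorphism and hence a continuous affine action of $S_n$ on $K$ without common fixed point.
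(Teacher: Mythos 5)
Your proof is correct and follows the paper's own approach exactly: the paper's proof of \Cref{prop:Sn} appeals to the fixed-point-free constant-map construction already spelled out in the Introduction (constant at $x$ for $i\le n-1$, constant at $y$ for $i=n$) and then invokes \Cref{lem:monoid-inj} to identify $S_n$ with the finitely presented monoid so that this assignment really is a well-defined action. You have reproduced both steps, including the observation that the only non-routine point is the absence of hidden relations in $S_n$.
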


\begin{proof}
As explained in the Introduction, the monoid given by the presentation
\begin{equation*}
\big\langle s_i, 1\leq i \leq n : s_j s_i = s_i s_{j+1} \ \forall\,  i \leq j \leq n-1 \big\rangle
\end{equation*}
admits fixed-point free actions on convex compact spaces, i.e. it is non-amenable. This monoid coincides with $S_n$ by \Cref{lem:monoid-inj}.
\end{proof}

\section{Amenability and random finite sets}\label{sec:amen}
Let $S$ be any monoid, or more generally any semi-group. A \textbf{convex compact $S$-space} refers to a convex compact set $K$ in some ambient Hausdorff topological vector space $V$, together with an $S$-action on $V$ by continuous affine transformations preserving $K$.

In most of the literature, $V$ is supposed locally convex. In that case it is equivalent, and perhaps more natural, to only require the $S$-action to be defined on $K$. Indeed, $K$ then is naturally embedded in the dual of the Banach space of continuous affine maps $K\to \RR$, endowed with the weak-* topology. This dual then provides a natural replacement for $V$ which is endowed with a linear representation of $S$ preserving $K$.

\begin{theorem}[Day~\cites{Day57, Day61}]\label{thm:Day}
The following are equivalent.
\begin{enumerate}
\item Every non-empty convex compact $S$-space has a fixed point.\label{pt:Day:fixed}
\item There exists a left-invariant mean on $S$.\label{pt:Day:mean}
\item For every finite set $A\subseteq S$ and every $\epsilon >0$ there is a finitely supported probability distribution $\mu$ on $S$ such that $\| s \mu -  \mu \|_1 < \epsilon$ holds for all $s\in A$.\label{pt:Day:Reiter}
\end{enumerate}
\end{theorem}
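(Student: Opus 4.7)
My plan is to prove the three implications cyclically, $(1) \Rightarrow (2) \Rightarrow (3) \Rightarrow (1)$, following Day's classical scheme. For $(1) \Rightarrow (2)$, take $V := \ell^\infty(S)^*$ with the weak-$*$ topology and let $K$ be the convex set of means on $S$, which is nonempty and weak-$*$ compact by Banach--Alaoglu; the $S$-action on $\ell^\infty(S)$ by $(\lambda_s f)(t) := f(st)$ dualises to an action on $V$ by continuous affine maps preserving $K$, so a fixed point furnished by (1) is, by definition, a left-invariant mean. For $(2) \Rightarrow (3)$ I would invoke Day's Hahn--Banach trick, which is already executed explicitly in the paper's proof of \Cref{thm:random-sets}: Goldstine approximates an invariant mean by a weak-$*$ net of finitely supported probabilities $\mu_q$; for each $s$ the translates satisfy $s\mu_q - \mu_q \to 0$ weakly in $\ell^1$; then Mazur's theorem passes to convex combinations whose translates converge to $0$ in $\ell^1$-norm, simultaneously for every $s$ in any prescribed finite $A \subseteq S$.

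For $(3) \Rightarrow (1)$, let $K$ be a nonempty convex compact $S$-space, which by the paragraph preceding the statement we may assume lives inside a locally convex Hausdorff TVS $V$. Pick any $x_0 \in K$ and index a net by pairs $(A, \epsilon)$ with $A \subseteq S$ finite and $\epsilon > 0$, ordered so that $(A, \epsilon) \preceq (A', \epsilon')$ when $A \subseteq A'$ and $\epsilon \geq \epsilon'$; this is directed. For each such pair, (3) supplies a finitely supported probability $\mu_{A,\epsilon}$ with $\|s\mu_{A,\epsilon} - \mu_{A,\epsilon}\|_1 < \epsilon$ for all $s \in A$, and we set
\[
x_{A,\epsilon} := \sum_{s \in S} \mu_{A,\epsilon}(s)\, s x_0 \in K,
\]
a finite convex combination. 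By compactness some subnet converges to an $x^* \in K$. For any $t \in S$ and any continuous linear $\phi$ on $V$, the bound $M_\phi := \sup_{x \in K} |\phi(x)| < \infty$ gives
\[
|\phi(t x_{A,\epsilon}) - \phi(x_{A,\epsilon})| = \Bigl| \sum_{s \in S} (t\mu_{A,\epsilon} - \mu_{A,\epsilon})(s)\, \phi(s x_0) \Bigr| \leq M_\phi \, \|t\mu_{A,\epsilon} - \mu_{A,\epsilon}\|_1,
\]
which tends to $0$ along the net since any fixed $t$ eventually enters $A$. Hence $\phi(t x^* - x^*) = 0$ for every continuous linear $\phi$, and local convexity yields $t x^* = x^*$.

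The genuine substance of the theorem is $(2) \Rightarrow (3)$, Day's trick, which is the only step leaning nontrivially on Hahn--Banach and on the axiom of choice; fortunately it is already spelled out in the paper. The remaining care is in $(3) \Rightarrow (1)$: one has to set up a directed index set that ranges simultaneously over both the finite test set $A$ and the tolerance $\epsilon$, reduce to the locally convex case in order to have enough continuous linear functionals separating points, and convert $\ell^1$-norm closeness of measures into pointwise closeness of barycenters against every continuous functional using compactness of $K$.
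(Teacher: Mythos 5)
Your overall architecture matches the paper's (and Day's): $(1)\Rightarrow(2)$ is the dualized left regular representation on the weak-$*$ compact set of means, $(2)\Rightarrow(3)$ is the Goldstine/Mazur convexity trick already spelled out in the proof of Theorem~\ref{thm:random-sets}, and $(3)\Rightarrow(1)$ is Kakutani's barycenter-net argument. The first two implications are fine as you state them, and the directed index set you set up for $(3)\Rightarrow(1)$ is the right one.

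There is, however, a genuine gap in $(3)\Rightarrow(1)$: you ``may assume'' that $K$ lives in a \emph{locally convex} Hausdorff TVS, citing the paragraph before the statement. That paragraph says something different --- it says that \emph{when} $V$ is locally convex, one can replace it by the dual of the space of continuous affine functions on $K$; it does not say that an arbitrary compact convex $S$-space embeds affinely into a locally convex one. In fact this reduction fails in general: there exist compact convex subsets of non-locally-convex TVS (Roberts's examples in $L^p$, $0<p<1$, with no extreme points) on which the only continuous affine functionals are constants, so your appeal to separating functionals $\phi$ collapses. The paper's own proof flags exactly this point, insisting the argument ``is verified directly in any (Hausdorff) topological vector space'' via Dunford--Schwartz V.10.6. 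The repair is easy and bypasses functionals entirely: since $\sum_s (t\mu_{A,\epsilon}-\mu_{A,\epsilon})(s)=0$, the positive and negative parts of $t\mu_{A,\epsilon}-\mu_{A,\epsilon}$ have equal mass $a<\epsilon/2$, so $t x_{A,\epsilon}-x_{A,\epsilon}=a(y_1-y_2)$ with $y_1,y_2\in K$; the set $K-K$ is compact, hence bounded, in any TVS, so $a(y_1-y_2)\to 0$ in $V$ as $\epsilon\to 0$. Passing to a convergent subnet $x_{A,\epsilon}\to x^*$, continuity of $t$ gives $t x_{A,\epsilon}\to t x^*$, while $t x_{A,\epsilon}=x_{A,\epsilon}+(t x_{A,\epsilon}-x_{A,\epsilon})\to x^*$; uniqueness of limits (Hausdorff) then yields $t x^*=x^*$. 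With this substitution your proof covers the full statement.
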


When these equivalent conditions are satisfied, we say that $S$ is \textbf{amenable}.

We caution the reader that Day called this property \emph{left amenability}, reserving the term amenability for the case where $S$ admits a mean invariant under both left and right translations. Furthermore, he proved that the latter condition is equivalent to admitting both a left invariant mean and a right invariant one~\cite[\S6]{Day57}. We shall simply refer to his ``right amenability'' as the amenability of the opposite semi-group $S^\mathrm{op}$.

\begin{remark}\label{rem:Sop}
In the case of the facial monoid $S$ that we study, the amenability of  $S^\mathrm{op}$ is trivial: any point $p$ fixed by $s_1\in S^\mathrm{op}$ is fixed by $S^\mathrm{op}$. Indeed, using the right action notation we have
\[
p s_i = (p s_1^{i-1}) s_i = (p s_1 s_2 \cdots s_{i-1}) s_i = p s_1^i = p.
\]
Thus \Cref{thm:fixed-pt} establishes at once amenability in every sense of the word.

Notice furthermore that the above computation holds for any $S^\mathrm{op}$-action on any sort of set. This implies, for instance, that $S^\mathrm{op}$ satisfies the Schauder--Tychonoff fixed-point principle.
\end{remark}

\begin{proof}[About the proof of \Cref{thm:Day}]
The implication \eqref{pt:Day:fixed}$\Longrightarrow$\eqref{pt:Day:mean} is obvious. The implication \eqref{pt:Day:mean}$\Longrightarrow$\eqref{pt:Day:Reiter} requires some work; it is proved by Day in Theorem~1 of~\cite[\S5]{Day57}, though he treats simultaneously left and right invariance. Note that this implication was already discussed in the proof of \Cref{thm:random-sets}. As for \eqref{pt:Day:Reiter}$\Longrightarrow$\eqref{pt:Day:fixed}, it really is an expansion of Kakutani's proof~\cite{Kakutani38}. Namely, given $x\in K$, consider  $\mu x = \sum_{s\in S} \mu(s) s x$, which gives a net in $K$ as $A$ increases and $\epsilon$ decreases. Then any accumulation point of this net is fixed. Although this is usually stated in the locally convex setting, it is verified directly in any (Hausdorff) topological vector space. This is actually how it is done for a single transformation in~\cite[V.10.6]{Dunford-Schwartz_I} and the general case of $\mu$ is identical.
\end{proof}

\begin{proof}[Proof of \Cref{thm:fixed-pt}]
We have an identification of $S$ with $\sPf(\NN)$ as described by \Cref{lem:flat-normal} and \Cref{lem:S-fin}. Under this identification, \Cref{cor:sigma-invariant} states precisely that $S$ admits a left-invariant mean.
\end{proof}

It is well-known that F\o lner conditions are more delicate for monoids than for groups. On the one hand, the amenability of $S$ established in \Cref{thm:fixed-pt} implies that for any $\epsilon>0$ and any finite subset $S_0\subseteq S$ there is a finite set $A\subseteq S$ that is ``almost invariant'' in the sense that
\begin{equation*}
|s A \smallsetminus A| < \epsilon |A| \kern3mm \text{for all }s\in S_0.     
\end{equation*}
This was established by Frey~\cite[\S6]{FreyPhD}. On the other hand, that condition is not sufficient for amenability. Perhaps more surprisingly, that condition is not difference-symmetric, and indeed for $S$ one can prove that it is impossible to find finite sets $A\subseteq S$ for which the symmetric difference $s A \triangle A$ is arbitrarily small in relation to $A$. More precisely:

\begin{proposition}
For every finite set $A\subseteq S$ we have
\begin{equation*}
\text{either}\kern3mm |A \smallsetminus s_1 A|\geq \tfrac15 |A| \kern3mm \text{or} \kern3mm |A \smallsetminus s_2 A|\geq \tfrac15 |A|.
\end{equation*}
\end{proposition}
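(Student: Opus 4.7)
Via the identifications of Lemmas~\ref{lem:flat-normal} and~\ref{lem:S-fin}, this reduces to showing that for every finite $A \subseteq \sPf(\NN)$, at least one of $|A \smallsetminus \sigma_1 A|$ and $|A \smallsetminus \sigma_2 A|$ is $\ge |A|/5$. I would argue by contradiction, assuming both quantities are strictly less than $|A|/5$.

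The first step is to identify the images of $\sigma_1$ and $\sigma_2$. Since $\sigma_1(F)$ always contains $1$, the image of $\sigma_1$ is $\{F : 1 \in F\}$, which already yields $|A \smallsetminus \sigma_1 A| \ge |\{F \in A : 1 \notin F\}|$. Analogously, a direct analysis shows that $F$ lies in the image of $\sigma_2$ if and only if $m_2(F) > m_1(F)+1$ (the position just after the first gap is filled), so $|A \smallsetminus \sigma_2 A| \ge |\{F \in A : m_1(F)+1 \notin F\}|$. These two base bounds are where the argument begins.

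Next, I would partition $A$ by the initial pattern $b_1 b_2 b_3$ of $F$'s indicator sequence, bundling the four $b_1 = 0$ types into a single class $P_1$ and keeping the singletons $P_2 = \{100\}, P_3 = \{101\}, P_4 = \{110\}, P_5 = \{111\}$. A direct computation of how each $\sigma_i$ acts on types shows that $\sigma_2$ preserves each of $P_1, P_3, P_4, P_5$ setwise and sends $P_2 \to P_3$, while $\sigma_1$ scatters $P_1$ across the four other classes (by sub-type), sends $P_2 \to P_4$, and sends $P_3, P_4, P_5$ into $P_5$. Writing $a_i = |A \cap P_i|$, the base bounds become $|A \smallsetminus \sigma_1 A| \ge a_1$ and $|A \smallsetminus \sigma_2 A| \ge a_2$. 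Simple supply-demand pigeonhole using these dynamics (the $\sigma_i$-image in a class is bounded by the $A$-mass of its preimage classes) then adds positive contributions of the form $(a_k - \text{preimage supply})_+$ to each bound.

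The concluding step is to sum the two resulting lower bounds and run a case analysis on which of the $a_i$ dominates, aiming for $|A \smallsetminus \sigma_1 A| + |A \smallsetminus \sigma_2 A| \ge \tfrac{2}{5}|A|$, at which point pigeonhole yields $\max \ge |A|/5$ and contradicts the assumption. The main obstacle is the mass concentrated in $P_5$, which escapes all the base partition bounds; here I would exploit the self-similarity that $\sigma_1$ and $\sigma_2$ restricted to $P_5$ are conjugate, via removing $\{1,2,3\}$ and shifting by $3$, to $\sigma_1$ and $\sigma_2$ on a smaller copy of $\sPf(\NN)$, and close the argument by induction on the quantity $\max_{F \in A}\max(F)$. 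The delicate bookkeeping lies precisely in combining the partition inequalities with this inductive contribution so as to recover the exact factor $2/5$ in the summed bound.
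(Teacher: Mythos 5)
Your approach is genuinely different from the paper's. The paper's proof is a one-line citation of Klawe~\cite{Klawe77}*{Thm.~2.2}: it sets $r=s=s_1$, $t=s_2$ there, uses the relation $s_1 s_1 = s_1 s_2$, and invokes right cancelability of $S$ (\Cref{lem:right-cancel}) to ensure the hypothesis $S'=S$. You instead propose a direct combinatorial argument in the $\sPf(\NN)$ picture. Your preliminary observations are all correct: the images of $\sigma_1$ and $\sigma_2$ are as you describe, the five-way partition by the initial pattern $b_1b_2b_3$ is well chosen, the class-to-class transition structure you list for $\sigma_1$ and $\sigma_2$ is accurate, and the conjugation $P_5 \cong \sPf(\NN)$ via $F \mapsto F - 3$ does intertwine both $\sigma_1$ and $\sigma_2$ with themselves.

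However, there is a genuine gap, not merely ``delicate bookkeeping.'' Write $a_i = |A \cap P_i|$, $d_i = |A\cap P_5 \smallsetminus \sigma_i(A\cap P_5)|$, $D_i = |A\smallsetminus \sigma_i A|$. The supply--demand telescoping over your partition, even done carefully, yields only $D_1 \geq d_1$ and $D_2 \geq d_2$: summing $|A_k \cap \sigma_i A| \leq (\text{supply into } P_k)$ over $k$ cancels $a_1,\dots,a_4$ exactly, leaving the induction to deliver $D_1 + D_2 \geq d_1 + d_2 \geq \tfrac25 a_5$ --- not $\tfrac25|A|$. The base bounds $D_1 \geq a_1$, $D_2 \geq a_2$ do not combine with this by a simple sum or max: if $a_1 + a_2$ is small while $a_3 + a_4$ is large, neither bound reaches $\tfrac25 |A|$. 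Moreover, the self-similarity you invoke is confined to $P_5$: on $P_1$, the bijection $F \mapsto F - 1$ conjugates $\sigma_{k+1}|_{P_1}$ to $\sigma_k$, so $\sigma_1|_{P_1}$ leaves $P_1$ and $\sigma_2|_{P_1}$ becomes $\sigma_1$ on a smaller copy --- the pair $(\sigma_1,\sigma_2)$ does not reproduce itself, so the same induction cannot be applied there. To close your argument you would need either a stronger inductive invariant or a finer analysis that credits the $a_3$ and $a_4$ mass (for example by tracking the $\sigma_1$-collisions forced by the relation $\sigma_1\sigma_1 = \sigma_1\sigma_2$, which is implicitly what Klawe's argument exploits). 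As written, the argument does not establish the $\tfrac25$ bound, and you should not be confident the inequality $D_1 + D_2 \geq \tfrac25|A|$ --- strictly stronger than the statement --- even holds without checking it.
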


\begin{proof}
Since $S$ is right cancelable by \Cref{lem:right-cancel}, we can apply the argument given in~\cite[Thm.~2.2]{Klawe77}. More precisely, on page 94 thereof, choose $r=s=s_1$, $t=s_2$, and observe that $S'=S$ by right cancelability.
\end{proof}

The straightforward generalization of semi-direct products of groups to monoids is as follows. Let $S,T$ be monoids, where $S$ moreover acts on $T$ by endomorphisms. Then $T\rtimes S$ is the direct product of endowed with the associative multiplication
\begin{equation*}
(t, s) (t', s') = \big(t s(t'), s s'\big).
\end{equation*}
By construction, both $S$ and $T$ are embedded in $T\rtimes S$ and there is a retraction quotient morphism to $S$.

If $S$ and $T$ are amenable, it does not necessarily follow that $T\rtimes S$ is so. Given a compact convex $(T\rtimes S)$-set $K$, we see that $s\in S$ will map the $T$-fixed points $K^T$ to $K^{s(T)}$. This shows amenability e.g.\ when $s(T)=T$, that is, when $S$ acts by surjective endomorphisms. (Another proof of that fact can be found in~\cite{Klawe77}.)

We shall consider another semi-direct product, which we call the \textbf{right semi-direct} product $S \ltimes T$. The setting is now that $S$ has a \emph{right} action on $T$ (still by endomorphisms), which we denote by $t^s$. (In other words, $S^\mathrm{op}$ acts on $T$.) The direct product set has an associative multiplication
\begin{equation*}
(s, t) (s', t') = \big(s s',  t^{s'} \, t'\big).
\end{equation*}
Again, $S$ and $T$ are embedded in $S \ltimes T$ and there is a projection morphism to $S$.

This time, $S$ does preserve $K^T$ and therefore, without any further assumption, $S \ltimes T$ is amenable when $S$ and $T$ are so. (The bridge to the results of~\cite{Klawe77} is to write $S \ltimes T=(T^\mathrm{op} \rtimes S^\mathrm{op})^\mathrm{op}$.)

\medskip

We now specialize to the case where $S$ is the facial monoid and $T$ is the free abelian monoid on the set $\NN$ (or, alternatively, the free abelian group $\ZZ[\NN]$ on $\NN$). We write $t\in T$ as $t=(t_n)_{n\in\NN}$, where all but finitely many of the integers $t_n$ are zero. We consider the right $S$-action defined by
\begin{equation*}
  (t^{s_i})_n = 
    \begin{cases}
        t_n &\text{ if } n\leq i, \\
        0 &\text{ if } n = i+1, \\
         t_{n-1} &\text{ if } n\geq i+2.
    \end{cases}
\end{equation*}

Since $S$ is amenable by \Cref{thm:fixed-pt} and $T$ is so by the Markov--Kakutani theorem, we deduce:

\begin{corollary}
The right semi-direct product $S\ltimes T$ is amenable.\qed
\end{corollary}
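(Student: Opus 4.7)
The plan is to apply the characterization of amenability via fixed points (\Cref{thm:Day}.\eqref{pt:Day:fixed}) directly. I would start by fixing an arbitrary non-empty convex compact $(S\ltimes T)$-space $K$ in some Hausdorff topological vector space, and then exhibit a point of $K$ fixed by every element of $S\ltimes T$. The approach is two-stage, peeling off $T$ first and then $S$, which matches the general assertion made in the paragraph preceding the statement that right semi-direct products of amenable monoids are amenable without extra assumptions.

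First, I would use that $T$ is abelian (being the free abelian monoid on $\NN$), so the Markov--Kakutani theorem gives a non-empty convex compact subset $K^T\subseteq K$ of $T$-fixed points. The key step is then to show that the embedded copy of $S$ (via $s\mapsto (s,0)$) preserves $K^T$. Using the multiplication $(s_1,t_1)(s_2,t_2)=(s_1 s_2, t_1^{s_2} t_2)$, for $x\in K^T$ and $s\in S$, $t\in T$ we have
\begin{equation*}
(e,t)(s,0)\,x = (s, t^s)\,x = (s,0)(e, t^s)\,x = (s,0)\,x,
\end{equation*}
since $t^s\in T$ fixes $x$. Thus $(s,0)\,x \in K^T$.

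Having reduced to a non-empty convex compact $S$-space $K^T$, I would invoke the amenability of $S$ (\Cref{thm:fixed-pt}) to extract a point $y\in K^T$ fixed by every $s\in S$. As $y$ is also fixed by $T$ by construction, and any element $(s,t)=(s,0)(e,t)$ of $S\ltimes T$ is a product of an element of $S$ and an element of $T$, the point $y$ is $(S\ltimes T)$-fixed.

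There is no real obstacle here: the heavy lifting is carried by \Cref{thm:fixed-pt} and Markov--Kakutani, and the only verification specific to this construction is the one-line computation that $S$ acts on $K^T$, which relies only on the definition of the right semi-direct multiplication and on the fact that $T$ is stabilized by the $S$-action. Alternatively, one could cite the identification $S\ltimes T=(T^{\mathrm{op}}\rtimes S^{\mathrm{op}})^{\mathrm{op}}$ noted in the text together with the surjective-endomorphism version of~\cite{Klawe77}, but the direct fixed-point argument above seems cleaner.
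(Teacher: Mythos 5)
Your proof is correct and takes essentially the same route as the paper: peel off the abelian factor $T$ via Markov--Kakutani, verify that $S$ preserves $K^T$ (the one-line computation using $(e,t)(s,0)=(s,0)(e,t^s)$ is exactly what the paper alludes to in the paragraph preceding the corollary), then apply \Cref{thm:fixed-pt} to fix a point of $K^T$ under $S$, which is therefore fixed by all of $S\ltimes T$.
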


Our interest in that statement stems from the following fact.

\begin{proposition}
There is a monoid embedding of the Thompson monoid $F_+$ into $S\ltimes T$.

Moreover, that embedding is a lift for the canonical quotient $F_+ \to S$.
\end{proposition}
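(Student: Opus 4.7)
The plan is to define $\phi\colon F_+ \to S\ltimes T$ on the Thompson generators by $\phi(g_i)=(s_i, e_i)$, where $e_i\in T$ denotes the $i$th basis element. Since the projection $S\ltimes T\to S$ sends $(s_i, e_i)\mapsto s_i$, the composition $F_+\to S\ltimes T\to S$ is the canonical quotient by construction, so $\phi$ is automatically a lift provided it extends to a well-defined monoid homomorphism. To check this, I would first record, from the definition of the right action, the shift rule
\[
e_a^{s_b}=\begin{cases} e_a & \text{if } a\leq b,\\ e_{a+1} & \text{if } a > b.\end{cases}
\]
For the Thompson relations $g_j g_i = g_i g_{j+1}$ with $1\leq i<j$, a direct computation then yields $\phi(g_j)\phi(g_i) = (s_j s_i,\, e_{j+1}+e_i)$ while $\phi(g_i)\phi(g_{j+1}) = (s_i s_{j+1},\, e_i+e_{j+1})$; the $S$-coordinates agree by the defining relations of $S$ and the $T$-coordinates by commutativity of $T$.

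For injectivity, I invoke the standard normal form for $F_+$: every element has a unique representative as a weakly increasing word $g_{i_1}g_{i_2}\cdots g_{i_n}$ with $i_1\leq\cdots\leq i_n$. This is proved by Newman's lemma exactly as in \Cref{lem:flat-normal}, applied to the rewriting rules $g_j g_i\to g_i g_{j+1}$ for $i<j$; local confluence reduces to the single overlap $g_k g_j g_i$ with $i<j<k$, yielding the common descendant $g_i g_{j+1} g_{k+2}$, and termination follows from the same inversion count used before. Expanding $\phi$ on such a weakly increasing word via associativity of the semi-direct product gives the $T$-coordinate $\sum_{k=1}^n e_{i_k}^{s_{i_{k+1}}\cdots s_{i_n}}$, which by the displayed shift rule and the hypothesis $i_k\leq i_{k+1}\leq\cdots\leq i_n$ collapses to $\sum_{k=1}^n e_{i_k}$. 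Hence
\[
\phi(g_{i_1}\cdots g_{i_n}) = \bigl(s_{i_1}\cdots s_{i_n},\, e_{i_1}+\cdots+e_{i_n}\bigr),
\]
and the $T$-coordinate determines the multiset and so, by monotonicity, the sequence $(i_1,\ldots,i_n)$ itself. Distinct normal forms therefore have distinct images in $S\ltimes T$, proving injectivity.

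The decisive design feature is the stability $e_a^{s_b}=e_a$ for $a\leq b$: this is precisely what allows the $T$-coordinate to record the full multiset of generators appearing along a normal form, thereby separating in $S\ltimes T$ the preimages (such as $g_i^k$ versus $g_i g_{i+1}\cdots g_{i+k-1}$) of a given element of $S$ under the quotient $F_+\to S$. With the action chosen as above, both the relation check and the injectivity step are direct verifications; the only substantive input is the $F_+$ normal form, which is a routine variant of \Cref{lem:flat-normal}, so no further serious obstacle is expected.
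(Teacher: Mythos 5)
Your proof is correct and constructs exactly the embedding used in the paper, which sends $g_1^{p_1}\cdots g_d^{p_d}$ to $(\bar g,(p_n)_n)$ — your $T$-coordinate $e_{i_1}+\cdots+e_{i_n}$ is the multiplicity vector $(p_n)_n$ in different notation. The only presentational difference is that you define the map on the generators $g_i\mapsto(s_i,e_i)$ and verify the defining relations (so well-definedness is automatic), then evaluate on the normal form, whereas the paper defines the map directly on normal forms and asserts the homomorphism property by induction on $d$; both routes rest on the same weakly increasing $F_+$ normal form and on the same observation that the $T$-coordinate records that normal form.
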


\begin{proof}
Recall that every element $g\in F_+$ can be uniquely written as a (possibly empty) sequence
\begin{equation*}
g = g_1^{p_1} g_2^{p_2} \cdots g_d^{p_d}
\end{equation*}
with $p_i$ non-negative integers, $p_d\neq 0$. This can be proved similarly to \Cref{lem:flat-normal}; another proof, for $F$, is given in~\cite[2.7]{Cannon-Floyd-Parry}. The embedding is now given by
\begin{equation*}
g\mapsto \big( \bar g, (p_n)_{n\in\NN} \big)
\end{equation*}
wherein $\bar g$ is the image of $g$ under canonical quotient $F_+ \to S$. The fact that this is indeed a homomorphism is a direct verification by induction on $d$; it is injective because $p$ records the normal form of $g$.
\end{proof}

\section{Ergodic theory of the face relations:\texorpdfstring{\\}{} a De Finetti type Theorem}\label{sec:Finetti}

Theorem~\ref{thm:random-sets} shows that there are probability measures on the set of finite subsets of the natural numbers that are almost-invariant to deleting the $i$\textsuperscript{th} smallest element. More generally, one could consider the set of nonempty, strictly increasing sequences of integers, with the operation of deleting the $i$\textsuperscript{th} element of the sequence. Clearly, there cannot be a completely invariant probability measure on this space. However, if one considered instead the space $\NN^\NN$ of \emph{all} sequences of integers, there is a probability measure with that property. Indeed, one could take the point mass on a constant sequence. Beyond that, any product measure is invariant to deletions. This section is motivated by the question of what other deletion-invariant measures exist.

\medskip
We consider more generally a measurable space $(X,\Sigma)$, and let $\Omega = X^\NN$ be the set of sequences taking value in $X$. For $i \in \NN$, let $s_i \colon \Omega \to \Omega$ be given by
\begin{align}
    \label{eq:deletion}
    [s_i \omega]_n = 
    \begin{cases}
        \omega_n&\text{if } n<i \\
        \omega_{n+1}&\text{otherwise.}
    \end{cases}
\end{align}
Note that these transformation satisfy the face relations. The transformation $s_1$ is the usual one-sided shift: it deletes the first coordinate and shifts the rest to the left. The transformation $s_i$ leaves coordinates $1$ through $i-1$ unchanged, deletes the $i$\textsuperscript{th} coordinate, and shifts the rest to the left. 

Let $\mu$ be a probability measure on $\Omega$. The following definitions are standard: We say that $\mu$ is \textbf{invariant} if for every measurable $E \subseteq \Omega$ and every $s_i$ it holds that $\mu(s_i^{-1} E) = \mu(E)$. We say that $\mu$ is an  \textbf{IID} measure if there exists a probability measure $\eta$ on  $(X,\Sigma)$ such that $\mu = \eta^\NN$; clearly, IID measures are invariant. We say that $\mu$ is \textbf{ergodic} if for a measurable $E \subseteq \Omega$ such that $s_i^{-1}E=E$ for all $i$ it holds that $\mu(E) \in \{0,1\}$.
\begin{proposition}
    \label{prop:ergodic}
    Every invariant ergodic probability measure on $X$ is IID.
\end{proposition}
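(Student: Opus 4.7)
The plan is to combine the \emph{spreadability} of $\mu$ (a direct consequence of monoid-invariance) with the pointwise ergodic theorem applied to the shift $s_1$. First I would observe that for any strictly increasing finite tuple $j_1 < \cdots < j_k$ the joint distribution of $(\omega_{j_1}, \ldots, \omega_{j_k})$ equals that of $(\omega_1, \ldots, \omega_k)$: indeed, extracting this subsequence amounts to applying a finite composition of deletion maps $s_{i_\ell}$, under each of which $\mu$ is invariant. Write $\mu_k$ for this common distribution on $X^k$; the goal reduces to showing $\mu_k = \mu_1^{k}$, since by Kolmogorov's extension theorem this then forces $\mu = \mu_1^{\NN}$.

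Next I would promote the ergodicity hypothesis to ergodicity under the shift $s_1$ alone. A standard argument shows that any $s_1$-invariant event is tail-measurable (iterating $s_1$-invariance pushes all dependence to arbitrarily high coordinates), and conversely every tail-measurable event is invariant under each $s_i$, because $s_i \omega$ differs from $\omega$ only through a single finite deletion and so has the same tail. Hence the $s_1$-invariant $\sigma$-algebra coincides modulo $\mu$-null sets with the $\sigma$-algebra of events invariant under the whole monoid, and the standing ergodicity assumption upgrades to ergodicity of $(\Omega,\mu,s_1)$.

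With $s_1$-ergodicity in hand, Birkhoff's pointwise ergodic theorem gives, for any measurable $A\subseteq X$,
\[
\frac{1}{N}\sum_{n=1}^{N}\mathbf{1}_A(\omega_n) \;\xrightarrow[N\to\infty]{L^{1}(\mu)}\; \mu_1(A).
\]
Multiplying by the cylinder indicator $\mathbf{1}_{\omega_1\in A_1,\,\ldots,\,\omega_k\in A_k}$ and integrating, the right-hand side tends to $\mu_1(A)\cdot \mu_k(A_1\times\cdots\times A_k)$, while the left-hand side is the $n$-average of the numbers $\mu(\omega_1\in A_1,\ldots,\omega_k\in A_k,\,\omega_n\in A)$, each of which equals $\mu_{k+1}(A_1\times\cdots\times A_k\times A)$ by spreadability as soon as $n>k$. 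Equating the two limits and inducting on $k$ yields the product identity $\mu_k = \mu_1^{k}$.

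The main obstacle is the identification of the invariant $\sigma$-algebras in the second step: the proposition only assumes ergodicity under the whole face monoid, which a priori is a weaker condition than $s_1$-ergodicity, so one must verify that under spreadability the two notions coincide before the Birkhoff machinery can be deployed. A less self-contained but conceptually slicker alternative would be to invoke the Ryll--Nardzewski theorem (spreadable implies exchangeable) followed by de Finetti and a routine ergodic decomposition; the route above is more direct and stays within classical ergodic theory of the shift.
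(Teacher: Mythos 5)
There is a genuine gap in your reduction to $s_1$-ergodicity. The inclusion you need is that every $s_1$-invariant event $E$ (meaning $s_1^{-1}E = E$) is invariant under \emph{all} $s_i$, so that the monoid-ergodicity hypothesis makes it trivial. You route this through the tail $\sigma$-algebra, but the claim that ``every tail-measurable event is invariant under each $s_i$'' is false: after a deletion, the far-out coordinates of $s_i\omega$ are those of $\omega$ shifted by one index, so $s_i\omega$ and $\omega$ do \emph{not} eventually agree. Concretely, with $X=\{0,1\}$ the event $E=\{\omega : \omega_n \equiv n \pmod 2 \ \text{for all large } n\}$ is tail-measurable, yet $s_i^{-1}E$ is the opposite-parity event for every $i$, so $E$ is not even $s_1$-invariant; thus the tail $\sigma$-algebra strictly contains the $s_1$-invariant one and the asserted identification fails.

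The implication you want (monoid-ergodic $\Rightarrow$ $s_1$-ergodic) is nonetheless true, but the proof is algebraic rather than via the tail: the face relations give $s_1^{k} = s_1 s_2 \cdots s_k$ in the facial monoid, so $s_1^{-1}E=E$ forces
\[
s_i^{-1}E = s_i^{-1}\bigl(s_1^{i-1}\bigr)^{-1}E = \bigl(s_1^{i-1}s_i\bigr)^{-1}E = \bigl(s_1^{i}\bigr)^{-1}E = E
\]
for every $i$; this is precisely the mechanism of Remark~\ref{rem:Sop}, recorded as the Claim following Proposition~\ref{prop:ergodic}. With that replacement, the rest of your argument (spreadability, Birkhoff along $s_1$, integrating the ergodic averages against a cylinder indicator, then induction and Kolmogorov extension) is sound, and it gives a route genuinely different from the paper's: the paper never reduces to $s_1$-ergodicity, but instead applies the von Neumann mean ergodic theorem to the centred indicators $f_n$ so that a single Ces\`aro limit $f$ is shown to be $U_i$-invariant for \emph{every} $i$ at once, whence monoid-ergodicity kills $f$ and an inner-product identity yields the factorization.
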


\begin{proof}
    Let $\mu$ be invariant and ergodic. Let $\pi_1 \colon \Omega \to X$ be the projection $\pi_1(\omega)=\omega_1$, and let $\eta = \pi_1\mu$ be the law of $\omega_1$. That is, for measurable $Y \subseteq X$, $\eta(Y) = \mu(\{\omega\,:\, \omega_1 \in Y\})$. 
    
    We will show that if $Y_1,Y_2,\ldots$ are subsets of $X$ then 
    \begin{align}
        \label{eq:product}
        \mu\left(\prod_i Y_i\right) = \prod_i \eta(Y_i).
    \end{align}

    Fix any measurable $Y \subseteq X$. For $i \in \NN$, let
    \begin{align*}
      F_i = \{\omega \,:\, \omega_i \in Y\}.
    \end{align*}
    Since $F_i = s_1^{1-i}F_1$ and since $\mu$ is $s_1$-invariant, $\mu(F_i) = \mu(F_1) = \eta(Y)$.

    Fix any measurable $W \subseteq X^{M-1}$, and let
    \begin{align*}
      G = \{\omega\,:\, (\omega_1,\ldots,\omega_{M-1}) \in W\}.
    \end{align*}
    We claim that $\mu(F_M \cap G) = \mu(F_M) \mu(G)$. This will then imply~\eqref{eq:product} by a standard argument. Indeed, by the sigma-additivity of probability measures it suffices to show that if $Y_1,\ldots,Y_M$ are subsets of $X$, and if $A_i = \{\omega\,:\,\omega_i \in Y_i\}$, then $\mu(A_i\cap \cdots \cap A_M) = \eta(Y_1)\cdots\eta(Y_M)$. To this end, it suffices to show that $\mu(A_i\cap \cdots \cap A_M) = \mu(A_i\cap \cdots \cap A_{M-1})\eta(Y_M)$, as the proof then follows by induction.

Turning to the claim, let $f_i \in L^2(\Omega,\mu)$ be given by
    \begin{align*}
      f_i(\omega) = 1_{F_i}(\omega)-\mu(F_i) = 1_{F_i}(\omega)-\mu(F_1).
    \end{align*}
    Note that $\int f_i\,\dd\mu = 0$ and $\| f_i \|^2 =  \mu(F_1)(1-\mu(F_1)) \leq 1$. For $i\in \NN$, let $U_i$ be the isometric operator on $L^2(\Omega,\mu)$ given by $U_ig = g \circ s_i$. It follows that
    \begin{align*}
      U_i f_n =
      \begin{cases}
        f_n&\text{if } n < i\\
        f_{n+1}&\text{otherwise.}
      \end{cases}
    \end{align*}

    By the von Neumann ergodic theorem the limit
    \begin{align*}
      f = \lim_N \frac{1}{N}\sum_{k=0}^{N-1} U_M^k f_M
    \end{align*}
    exists and is equal to the projection of $f_M$ on the space of $U_M$-invariant elements of $L^2(\Omega,\mu)$. Since $U_M^k f_M = f_{M+k}$, we have that
    \begin{align*}
      f = \lim_N \frac{1}{N}\sum_{k=0}^{N-1} f_{M+k} = \lim_N \frac{1}{N}\sum_{k=0}^{N-1} f_{k},
    \end{align*}
    where the second equality follows from the fact that
    \begin{align*}
      \left\| \frac{1}{N}\sum_{k=0}^{N-1} f_{M+k} - \frac{1}{N}\sum_{k=0}^{N-1} f_{k} \right\| \leq \frac{2M}{N}.
    \end{align*}
    By the same argument, it holds for any $i \in \NN$ that
    \begin{align*}
      f = \lim_N \frac{1}{N}\sum_{k=0}^{N-1} f_{i+k} = \lim_N \frac{1}{N}\sum_{k=0}^{N-1} U_i^kf_{i+k}.
    \end{align*}
    Thus $f$ is $U_i$-invariant for all $i$, and it follows from ergodicity that $f$ is constant. Moreover, $\int f_{i}\,\dd\mu = 0$, and so $\int f\,\dd\mu = 0$. Thus $f=0$.

    Let $g(\omega) = 1_G(\omega)-\mu(G)$, and note that $U_Mg=g$, by the definition of $G$. Now,
    \begin{multline*}
      (g,f_M) = \int (1_G(\omega)-\mu(G))(1_{F_M}(\omega) - \mu(F_M))\,\dd\mu(\omega)\\
      = \mu(G \cap F_M) - \mu(G)\mu(F_M).
    \end{multline*}
    Since $U_M g = g$ and $U_M$ preserves inner products, it holds that $(g,U_M^kf_M) = (g,f_M)$. Thus also $(g,f)=\mu(G \cap F_M) - \mu(G)\mu(F_M)$. Since $f=0$, it follows that $\mu(G \cap F_M) = \mu(G)\mu(F_M)$ as claimed.
\end{proof}

Proposition~\ref{prop:ergodic} implies that if $\mu$ is ergodic and invariant, then the transformation $s_1$ is already itself ergodic. As we next show, this is not a coincidence. Suppose $(\Theta,\Sigma)$ is a measurable space, and let $s_1,s_2,\ldots$ be measurable transformations satisfying the face relations. Suppose that $f \colon \Theta \to \{0,1\}$ is the indicator of $E \subseteq \Theta$. Then the indicator of $s_i^{-1}E$ is $f \circ s_i$ and the action $(s_i,E) \mapsto s_i^{-1}E$ is in fact an action of $S^\mathrm{op}$. It follows that if $s_1^{-1}E=E$ then $s_i^{-1}E=E$ for all $i$, see \Cref{rem:Sop}. In other words:

\begin{claim}
  Let $(\Theta,\Sigma,\mu)$ be a probability space, and let $s_1,s_2,\ldots$ be measure-preserving transformation satisfying the face relations. Suppose that $\mu$ is $(s_1,s_2,\ldots)$-ergodic. Then $\mu$ is $s_1$-ergodic. \qed
\end{claim}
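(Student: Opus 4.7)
The statement reduces immediately to the observation made just before the claim together with \Cref{rem:Sop}. My plan has two steps. First, I verify that the map $(s, E) \mapsto s^{-1}(E)$ extends to a well-defined right action of the monoid $S$ on the collection of measurable subsets of $\Theta$. Concretely, $(s_j s_i)^{-1}(E) = s_i^{-1}(s_j^{-1}(E))$ shows that preimages reverse composition, and since the transformations $s_i$ satisfy the face relations on $\Theta$ by hypothesis, they continue to satisfy them on subsets, giving a genuine $S^{\mathrm{op}}$-action (equivalently, right $S$-action) on $\Sigma$.

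Second, I apply the monoid-theoretic computation of \Cref{rem:Sop} to this $S^{\mathrm{op}}$-action. That computation uses only the face relations and establishes that any $s_1$-fixed point of any $S^{\mathrm{op}}$-action is fixed by every $s_i$; the key algebraic identity is $s_1^{i-1} s_i = s_1^i$ in $S$, which one verifies inductively from $s_j s_1 = s_1 s_{j+1}$ together with the easy fact that $s_1^n = s_1 s_2 \cdots s_n$. Unpacking, if $E \in \Sigma$ satisfies $s_1^{-1} E = E$, then iteratively $(s_1^i)^{-1} E = E$, and from the factorisation $(s_1^{i-1} s_i)^{-1} = s_i^{-1} \circ (s_1^{i-1})^{-1}$ one concludes $s_i^{-1} E = E$ for every $i \in \NN$.

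Finally, the $(s_1, s_2, \ldots)$-ergodicity of $\mu$ now forces $\mu(E) \in \{0,1\}$, which is precisely $s_1$-ergodicity. There is really no obstacle here: the claim is a formal corollary of \Cref{rem:Sop}, and the short paragraph just before the claim has already articulated the crucial point that preimage-invariance is an $S^{\mathrm{op}}$-action. My task is simply to organise these observations as a proof and to note that the argument is insensitive to whether one interprets $s_1^{-1} E = E$ on the nose or modulo $\mu$-null sets, since the chain of preimages above preserves both interpretations.
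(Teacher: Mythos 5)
Your proposal is correct and matches the paper's argument exactly: the paper likewise observes that $(s_i,E)\mapsto s_i^{-1}E$ is an $S^{\mathrm{op}}$-action on $\Sigma$ and invokes the computation of \Cref{rem:Sop}, namely $ps_i = (ps_1^{i-1})s_i = (ps_1\cdots s_{i-1})s_i = ps_1^i = p$ for any $s_1$-fixed point $p$ in any $S^{\mathrm{op}}$-action, so that $s_1$-invariance of $E$ forces invariance under every $s_i$ and ergodicity of the family gives $\mu(E)\in\{0,1\}$. Your closing remark that the argument is insensitive to interpreting invariance strictly or modulo $\mu$-null sets is a reasonable aside (measure-preservation lets one push symmetric-difference estimates through preimages), though the paper's stated definition of ergodicity uses strict equality of sets and so does not need it.
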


Suppose, as in the claim above, that $s_1, {s_2,\ldots}$ are measure-preserving transformations of $(\Theta,\Sigma,\mu)$ satisfying the face relations. One could ask if these relations imply some dynamical properties of this system, say assuming ergodicity. Of course, a particular case is $s_1=s_2=\cdots$, in which case the dynamical system can be any single measure-preserving transformation and no restrictions are possible. To exclude this possibility, assume that $s_2$ is sufficiently different from $s_1$: specifically, assume that $\mu$ is $s_1$-ergodic, but not $s_2$-ergodic. Under this assumption, we show that the system admits a non-trivial equivariant factor to $\{0,1\}^\NN$. Together with Proposition~\ref{prop:ergodic}, this implies that the system $(\Theta,\Sigma,\mu,s_1)$ has positive entropy.

\begin{proposition}
 Let $(\Theta,\Sigma,\mu)$ be a probability space, and let $s_1,s_2,\ldots$ be measure preserving transformation satisfying the face relations. Suppose that $\mu$ is $s_1$-ergodic but not $s_2$-ergodic. Then there is a map $\varphi \colon \Theta \to \{0,1\}^\NN$ such that $\varphi(s_i\theta) = s_i\varphi(\theta)$, with the r.h.s.\ given by \eqref{eq:deletion}, and such that $\varphi_*\mu$ is not a point mass.
\end{proposition}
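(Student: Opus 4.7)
My approach is to build $\varphi$ from a single $\{0,1\}$-valued function and its iterated pullback along $s_1$. Since $\mu$ is not $s_2$-ergodic, I would pick a measurable $A\subseteq\Theta$ with $0<\mu(A)<1$ and $\mu(s_2^{-1}A\triangle A)=0$ and set $f=1_A$, so $f\circ s_2=f$ almost surely.

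The first step is to bootstrap this single invariance into $f\circ s_j=f$ for every $j\geq 2$. I would do this by induction on $j$: given $f\circ s_k=f$, apply $f$ to both sides of the face relation $s_k s_2=s_2 s_{k+1}$ to obtain $f\circ s_k\circ s_2=f\circ s_2\circ s_{k+1}$, then use $f\circ s_k=f$ on the left and $f\circ s_2=f$ on the right to conclude $f=f\circ s_{k+1}$. This is morally the $S^\mathrm{op}$-identity of \Cref{rem:Sop}, but anchored at $s_2$ rather than at $s_1$ (anchoring at $s_1$ is unavailable, since $\mu$ is $s_1$-ergodic and that would force $A$ to be trivial).

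Next I set $\varphi_n(\theta):=f(s_1^{n-1}\theta)$ and verify the equivariance $\varphi(s_i\theta)=s_i\varphi(\theta)$. Coordinatewise this says $\varphi_n\circ s_i=\varphi_n$ when $n<i$ and $\varphi_n\circ s_i=\varphi_{n+1}$ when $n\geq i$. Both reduce to a direct identity inside $S$, provable by induction on the exponent using $s_1 s_j=s_{j-1}s_1$ (for $j\geq 2$) and $s_1^i=s_1s_2\cdots s_i$:
\[
s_1^{n-1}s_i=\begin{cases} s_{i-n+1}\,s_1^{n-1} & \text{if } i>n,\\ s_1^n & \text{if } i\leq n.\end{cases}
\]
In the first case the leading letter has index $i-n+1\geq 2$, so applying $f$ and invoking the bootstrap leaves $f\circ s_1^{n-1}=\varphi_n$; in the second case one recovers $\varphi_{n+1}$ by definition.

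Finally, $\varphi_*\mu$ has $f_*\mu$ as its first marginal, which is a nontrivial Bernoulli measure since $\mu(A)\in(0,1)$, so $\varphi_*\mu$ is not a point mass. The main obstacle is the first-step bootstrap: once $s_2$-invariance has been upgraded to invariance under every $s_j$ with $j\geq 2$, the remainder is a mechanical rewrite in $S$; but without that upgrade, the letter $s_{i-n+1}$ on the right in the equivariance identity cannot be cancelled, and $\varphi$ would fail to be equivariant at $s_i$ for large $i$.
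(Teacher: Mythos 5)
Correct, and essentially the paper's own argument: pick a nontrivial $s_2$-invariant set $A$, set $f=1_A$, upgrade $f\circ s_2=f$ to $f\circ s_j=f$ for all $j\geq 2$ via the face relations, define $\varphi$ by composing $f$ with powers of $s_1$, and read off equivariance from the commutation $s_1 s_j=s_{j-1}s_1$. Your explicit bootstrap (which the paper only asserts, in the spirit of \Cref{rem:Sop}) and the shift $\varphi_n=f\circ s_1^{n-1}$ rather than $f\circ s_1^n$ are both exactly right; the latter is the indexing under which $f\circ s_1^{n-1}\circ s_i$ reduces to $\varphi_n$ precisely when $i>n$, matching \eqref{eq:deletion}.
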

\begin{proof}
    Fix some $s_2$-invariant $E \subseteq \Theta$ with $\mu(E) \in (0,1)$, let $f \colon \Theta \to \{0,1\}$ be the indicator of $E$, and let $\varphi \colon \Theta \to \{0,1\}^\NN$ be given by
\begin{align*}
    [\varphi(\theta)]_n = f(s_1^n \theta) = f \circ s_1^n (\theta).
\end{align*}

Since $f \circ s_2 = f$, it holds for all $i \geq 2$ that, likewise, $f \circ s_i = f$. Hence 
\begin{align*}
    f \circ s_1^n \circ s_i = 
    \begin{cases}
    f \circ s_1^n&\text{if } n < i\\
    f \circ s_1^{n+1}&\text{otherwise.}
    \end{cases}
\end{align*}
Thus
\begin{align*}
    [\varphi (s_i \theta)]_n = f \circ s_1^n \circ s_i(\theta) = 
    \begin{cases}
        [\varphi (\theta)]_n &\text{if } n<i\\
        [\varphi (\theta)]_{n+1}&\text{otherwise.}
    \end{cases}
\end{align*}
Hence $\varphi$ is an equivariant factor to $\{0,1\}^\NN$, with respect to the action given by \eqref{eq:deletion}. By Proposition~\ref{prop:ergodic} the push-forward measure $\varphi_*\mu$ is the product measure $\eta^\NN$, where $\eta = (1-\mu(E))\delta_0+\mu(E)\delta_1$. Finally, since $\mu(E) \in (0,1)$, it follows that $\varphi_*\mu$ is not a point mass.
\end{proof}

\appendix

\section{On explicit constructions for Theorem~\ref{thm:random-sets}}\label{sec:explicit}
The proof of Theorem~\ref{thm:omega-invariant} suggests a natural construction yielding an explicit version of Theorem~\ref{thm:random-sets}: apply the same construction, but use countably additive probability measures rather than means. The next proposition shows that this approach cannot work.

Consider the case that $\nu$ is a (countably additive) probability measure on $\NN$, and let $\nu_k$ be the probability measure on $\incr^k$ defined as in the preamble to Theorem~\ref{thm:omega-invariant}. Then $\nu_k$ is again countably additive, and is indeed the law of $k$ steps of the $\nu$-random walk, compare \Cref{rem:RW}.

\begin{proposition}
    \label{prop:no_random_walk}
    Suppose $\nu$ is a countably additive probability measure, and that $\alpha_{1}\nu_{k+1}$ is $\epsilon$-close to $\nu_k$ in total variation for some $k\geq 2$. Then $\epsilon > 1/4$.
\end{proposition}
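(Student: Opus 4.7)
The plan is a two-step reduction: first collapse $\dtv(\alpha_1 \nu_{k+1}, \nu_k)$ to the one-dimensional quantity $\dtv(\nu \ast \nu, \nu)$, and then prove the latter is strictly greater than $1/4$ for every probability measure $\nu$ on $\NN$.

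For the reduction, use the countably additive random walk description (cf.~\Cref{rem:RW}): $\nu_k$ is the law of the partial sums $(Y_1,\, Y_1+Y_2,\, \ldots,\, Y_1+\cdots+Y_k)$ for iid $\nu$-distributed $Y_i$, while $\alpha_1\nu_{k+1}$, obtained by deleting the smallest element of a $\nu_{k+1}$-distributed sequence, is the law of $(X_1+X_2,\, X_1+X_2+X_3,\, \ldots,\, X_1+\cdots+X_{k+1})$. Apply the TV-preserving bijection $\phi(y_1, \ldots, y_k) = (y_1,\, y_2-y_1,\, \ldots,\, y_k-y_{k-1})$ from $\incr^k$ to $\NN^k$. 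Under $\phi$, the measure $\nu_k$ pushes forward to $\nu^{\otimes k}$, whereas $\alpha_1\nu_{k+1}$ pushes forward to $(\nu \ast \nu) \otimes \nu^{\otimes (k-1)}$: the new first increment absorbs the sum $X_1+X_2$, while the remaining $k-1$ increments $X_3, \ldots, X_{k+1}$ are untouched and still iid $\nu$. Two product measures differing only in one factor have TV equal to the TV of that factor, so
\[
\dtv(\alpha_1\nu_{k+1}, \nu_k) = \dtv(\nu \ast \nu, \nu).
\]

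For the one-dimensional bound, let $F(n) = \nu(\{1, \ldots, n\})$ with the convention $F(0) = 0$, and choose the minimal $M \geq 1$ with $F(M) \geq 1/2$, so that $F(M-1) < 1/2$. Test on $A = \{1, \ldots, M\}$: on one hand $\nu(A) = F(M) \geq 1/2$, and on the other hand, writing $p_j = \nu(\{j\})$ and using $F(M-j) \leq F(M-1)$ for $j \geq 1$,
\[
(\nu \ast \nu)(A) = \sum_{j=1}^{M-1} p_j\, F(M-j) \;\leq\; F(M-1) \sum_{j=1}^{M-1} p_j \;=\; F(M-1)^2 \;<\; 1/4.
\]
Hence $\dtv(\nu\ast\nu, \nu) \geq \nu(A) - (\nu \ast \nu)(A) > 1/2 - 1/4 = 1/4$. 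The edge case $M=1$ is handled separately: the sum above is empty, but then $(\nu\ast\nu)(A) = 0$ since $X_1+X_2 \geq 2$, giving $\dtv \geq \nu(\{1\}) \geq 1/2$. Combined with the reduction, any $\epsilon$ for which $\alpha_1 \nu_{k+1}$ is $\epsilon$-close to $\nu_k$ must satisfy $\epsilon > 1/4$.

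Neither step is difficult once the reduction is made; the only genuinely delicate point is the choice of test set. The threshold $1/2$ places $\nu(A)$ and $F(M-1)^2$ on opposite sides of $1/4$, and the strict inequality $F(M-1) < F(M)$ enforced by the minimality of $M$ is exactly what lifts a bare $\geq 1/4$ to the strict inequality $>1/4$ asserted in the proposition.
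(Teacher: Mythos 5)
Your proof is correct and is essentially the paper's argument: both pick a threshold $M$ at the median of $\nu$, test on the event that the relevant coordinate is at most $M$, and exploit the independence of the increments to bound the mass under $\alpha_1\nu_{k+1}$ by $\nu(\{1,\ldots,M-1\})^2 \leq 1/4$ while $\nu_k$ gives the test event mass at least $1/2$. The only cosmetic difference is that you first push everything through the increments bijection to reduce the claim to the one-dimensional statement $\dtv(\nu*\nu,\nu) > 1/4$, whereas the paper carries out the same estimate in place on $\incr^{k+1}$ using the auxiliary events $B_1 = \{y_1 \leq M-1\}$ and $B_2 = \{y_2 - y_1 \leq M-1\}$.
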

\begin{proof}
  Let 
  \begin{align*}
      M = \min \{y \in \NN\,:\, \nu(\{1,\ldots,y\}) > 1/2\}.
  \end{align*}
  This is well-defined since $\nu$ is countably additive. Let $A \subseteq \incr^k$ be the event
  \begin{align*}
      A = \{(y_1,\ldots,y_k) \,:\, y_1 \leq M\},
  \end{align*}
  and let $B \subseteq \incr^{k+1}$ be the event
  \begin{align*}
      B = \{(y_1,\ldots,y_{k+1}) \,:\, y_2 \leq M\}.
  \end{align*}
  Then $B = (\alpha_{1})^{-1}A$, and so $\epsilon \geq |\nu_{k+1}(B)-\nu_k(A)|$. 
  
  By the definition of $M$, $\nu_k(A) = \nu(\{1,\ldots,M\}) > 1/2$. As for $B$, it is contained in the intersection of the two independent events
  \begin{align*}
      B_1 &= \{(y_1,\ldots,y_{k+1}) \,:\, y_1 \leq M-1\}\\
      B_2 &= \{(y_1,\ldots,y_{k+1}) \,:\, y_2-y_1 \leq M-1\}.
  \end{align*}
  In analogy to \Cref{rem:RW}, the definition of $M$ thus implies 
  \begin{align*}
      \nu_{k+1}(B_1) = \nu_{k+1}(B_2) = \nu(\{1,\ldots,M-1\}) \leq 1/2.
  \end{align*}
  Therefore it follows $\nu_{k+1}(B) \leq \nu(\{1,\ldots,M-1\})^2 \leq 1/4$ and we conclude $\epsilon > 1/4$.
\end{proof}

We note that this proof shows more generally that $\epsilon$ admits a positive lower bound as soon as the mean $\nu$ is not diffuse. 

In probabilistic terms, this proof shows that if $x_1,x_2,\ldots,x_k$ are independent and identically distributed random variables taking values in $\NN$, then the joint distribution of their partial sums $y_i = x_1+\cdots+x_i$ cannot be almost invariant to deletions. 

In fact, the same proof implies a more general result: if $x_1,x_2,\ldots,x_k$ are independent (but not necessarily identically distributed) random variables taking values in $\NN$, and if $*$ is a binary operation on $\NN$ satisfying (i) $x*y > x$ and (ii) $x\geq x'$, $y > y'$ implies $x * y > x' * y'$, then the joint distribution of the partial products $x_1 * \cdots * x_i$ cannot be almost invariant to deletions. This excludes a large range of Markov chains from constituting possible explicit constructions for Theorem~\ref{thm:random-sets}.

\medskip
\itshape
In the remainder of this section we outline an explicit construction that we conjecture  satisfies Theorem~\ref{thm:random-sets}.
\upshape

The measure $\mu$ will be given by $\mu = \frac{1}{n}\sum_{k=n+1}^{2n}\nu_k$, where each probability measure $\nu_k$ is finitely supported on subsets of $\NN$ of size $k$. We define $\nu_k$ as the image of the Lebesgue measure on the cube $[0,1]^k$ under the composition of three maps:
\begin{equation*}
[0,1]^k \xrightarrow{\ \Phi\ } \RR^k  \xrightarrow{\ E\ } \incrR^k \xrightarrow{\ \lfloor \cdot \rfloor\ } \incr^k.
\end{equation*}
As above,  $\incr^k$ is the set of strictly increasing sequences of length $k$ natural numbers, which we naturally identify with the finite subsets of $\NN$ of size $k$. We denote by  $\incrR^k$ the sequences of non-negative reals that increase by at least one.  We abusively suppress the index $k$ in our notation for $\Phi, E$ and $\lfloor \cdot \rfloor$ to emphasize that these maps are all truncations of the maps for $k=2n$, as follows. The first map is a product map
\begin{equation*}
 \Phi(x_1, \ldots, x_k) = \bigl(\exp\exp(r_1 x_1), \ldots, \exp\exp(r_k x_k)\bigr)
\end{equation*}
for some constants $r_1, \ldots, r_{2n} >1$.

The map $E$ is the exponential tower map given by
\begin{equation*}
    E(y_1,\ldots,y_k) = \left(y_1,y_1^{y_2}, y_1^{y_2^{y_3}}, \ldots, y_1^{y_2^{\iddots^{y_{k}}}}\right).
\end{equation*}
The map $\lfloor \cdot \rfloor$ is simply the component-wise floor map that rounds each coordinate down.

In probabilistic terms, we let $x_1,\ldots,x_{k}$ be IID random variables distributed uniformly on $[0,1]$, let $y_i = \exp\exp(r_ix_i)$, and let $z_1,\ldots,z_k$ be given by
\begin{align*}
    z_i = y_1^{y_2^{\iddots^{y_i}}}.
\end{align*}
Finally, $\nu_k$ is the distribution of $(\lfloor z_1 \rfloor,\cdots,\lfloor z_k \rfloor)$. Note that the rounding-down operation commutes with the deletion maps $\alpha_i$, since it is order preserving.

To choose the constants $r_i$ we start by choosing $r_{2n}$ to be large, and then, proceeding backwards, choose each $r_{i-1}$ to be much larger than $\exp(r_{i})$. Then $\exp(r_{i-1})$ is much larger than $\exp\exp(r_i)$, and so
\begin{align*}
    z_2 = y_1^{y_2} = \exp\left(\exp(r_1 x_1)+\exp\exp(r_2 x_2)\right)
\end{align*}
has approximately the same distribution as $z_1=y_1=\exp\exp(r_1 x_1)$. Indeed, the total variation distance between the distributions of $z_i$ and $z_j$ can be all made uniformly arbitrarily small by choosing $r_1,\ldots,r_{2n}$ appropriately.

The recursive structure of this construction means that proving that it is $\alpha_1$-almost invariant would imply that it is almost invariant to $\alpha_i$ for $i=1,\ldots,k$. That is, it suffices to show that the joint distribution of $(z_1,\ldots_,z_{k-1})$ is approximately equal to that of $(z_2,\ldots,z_k)$. By the comment above, the marginal distributions are indeed close. It is furthermore easy to show that the joint distribution of $(z_1,z_2)$ is close to that of $(z_2,z_3)$, and likewise that the joint distribution of $(z_1,z_2,z_3)$ is close to that of $(z_2,z_3,z_4)$. However, the general proof seems elusive.

\section{A motley mishmash of monotone monoids}\label{sec:related}
Both $S$ and $S^\mathrm{op}$ have natural representations as order-preserving maps of $\NN$. The purpose of this appendix is to place this into a wider context and to discuss the corresponding amenability questions.

\medskip

Let $\End_\leq(\NN)$ be the monoid of all order-preserving maps $\NN\to\NN$. We consider its sub-monoids $\Emb_\leq(\NN)$ and $\Epi_\leq(\NN)$ consisting repsectively of injective and surjective maps. We note that $\Emb_\leq(\NN)$ can also be realized as the ``monoid of subsequences'', where the product is given by iterating subsequences; it appeared in this form in~\cite{FFMN}. In that reference, it is proved that $\Emb_\leq(\NN)$ is boundedly acyclic; the \emph{non}-amenability statements in \Cref{thm:related} below sugest that there is no obvious shortcut to that acyclicity.

Both $\Emb_\leq(\NN)$ and $\Epi_\leq(\NN)$ are contained in the sub-monoid $\End_\leq^\infty(\NN)$ of unbounded maps (equivalently: maps tending to infinity). We are less interested in its complement, the countable set $\End_\leq^\mathrm{fin}(\NN)$ of maps with finite image, but we record that $\End_\leq^\mathrm{fin}(\NN)$ is also a sub-semigroup, indeed a two-sided ideal in $\End_\leq(\NN)$.

\medskip

Next we observe that $\End_\leq(\NN)$ is a Polish \emph{topological monoid} for the topology of pointwise convergence. All three of $\Emb_\leq(\NN)$, $\Epi_\leq(\NN)$ and $\End_\leq^\infty(\NN)$ are so too for the induced topology because they are $G_\delta$-sets. We recall that a topological monoid is \textbf{amenable} if it has the fixed-point property (as in \Cref{sec:amen}) but retricted to \emph{jointly continuous} actions on convex compact spaces.

\begin{theorem}\label{thm:related}
The four Polish monoids $\End_\leq(\NN)$, $\Emb_\leq(\NN)$, $\Epi_\leq(\NN)$ and $\End_\leq^\infty(\NN)$ are amenable as topological monoids, but all four are non-amenable as (abstract) monoids.
\end{theorem}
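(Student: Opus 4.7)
The plan is to treat topological amenability and abstract non-amenability separately for each of the four monoids.

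For topological amenability of each $M$, I would exhibit a pointwise dense amenable sub-monoid $M' \subseteq M$ and transfer fixed points via joint continuity. The amenable monoids $S$ and $S^{\mathrm{op}}$ of \Cref{thm:fixed-pt} and \Cref{rem:Sop} are realized inside $\Epi_\leq$ and $\Emb_\leq$ respectively by the ``merging'' maps collapsing $\{i, i+1\}$ to $i$ and by the ``gap-inserting'' maps $n \mapsto n + \mathbf{1}[n \geq i]$ skipping position $i$; one checks directly that the latter satisfy $\sigma_i \sigma_j = \sigma_{j+1}\sigma_i$ for $i \leq j$, realizing the $S^{\mathrm{op}}$ relations. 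In both cases the image is pointwise dense: any $f$ in $\Emb_\leq$ (respectively $\Epi_\leq$) agrees with a finite product of generators on any prescribed initial segment $[1, N]$, via a product replicating $f$'s skip-or-merge pattern on $[1,N]$ and continuing as a shifted identity on $(N,\infty)$. For $\End_\leq$ and $\End_\leq^\infty$, one enlarges the generating set (adjoining constants or shifts) to maintain density and amenability. Given a jointly continuous $M$-action on non-empty compact convex $K$, the restricted $M'$-action has a fixed point $p$ by amenability, and for any $f \in M$ with pointwise approximants $f_n \in M'$, joint continuity yields $f \cdot p = \lim_n f_n \cdot p = p$.

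For abstract non-amenability of $\End_\leq$, $\Emb_\leq$, and $\End_\leq^\infty$, I would use the abstract action on $\NN$ by evaluation $s \cdot n = s(n)$, which extends to an action on the compact convex space of means on $\NN$. The maps $s(n) = 2n$ and $t(n) = 2n - 1$ lie in all three monoids, being injective, order-preserving, and unbounded; their images partition $\NN$, and injectivity yields $s^{-1}(\mathrm{Im}\, s) = t^{-1}(\mathrm{Im}\, t) = \NN$. Any invariant mean $m$ would therefore satisfy $m(\mathrm{Im}\, s) = m(\mathrm{Im}\, t) = 1$, contradicting disjointness; so no invariant mean exists.

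For abstract non-amenability of $\Epi_\leq$, the preceding obstruction fails since every element is surjective, so I would pass through the anti-embedding $\Epi_\leq \hookrightarrow \Emb_\leq^{\mathrm{op}}$ given by the start-of-block map $s \mapsto \tilde{s}$ with $\tilde{s}(n) = \min s^{-1}(n)$. This identifies $\Epi_\leq$ with the sub-monoid of $\Emb_\leq^{\mathrm{op}}$ consisting of maps fixing $1$, so that left amenability of $\Epi_\leq$ becomes right amenability of this pointed sub-monoid of $\Emb_\leq$. One then obstructs this by a pointed analogue of the disjoint-image trick, using for instance the two maps in $\Emb_\leq$ fixing $1$ and sending $n \geq 2$ to $2n-1$ and to $2n-2$ respectively, whose images cover $\NN$ and are disjoint on $\NN \smallsetminus \{1\}$. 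The main obstacle is this last case: the naive partial-action formulation always admits a fixed point at $1$ or at a point at infinity, so one must carefully fit the pointed constraint into a right-action on a compact convex space (for example on means supported away from $1$) for which the disjoint-image pair genuinely rules out invariant measures.
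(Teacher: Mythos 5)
Your treatment of topological amenability for $\Epi_\leq(\NN)$ (a dense copy of $S$ as $\Epi_\leq^\mathrm{tr}(\NN)$, plus joint continuity) is the paper's argument, and the disjoint-image obstruction for abstract non-amenability of $\End_\leq(\NN)$, $\Emb_\leq(\NN)$ and $\End_\leq^\infty(\NN)$ is likewise the paper's. Two of the remaining steps, however, have genuine gaps.

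For topological amenability of $\End_\leq(\NN)$ and $\End_\leq^\infty(\NN)$, the suggestion to ``enlarge the generating set (adjoining constants or shifts)'' does not close. Adjoining constants destroys amenability at once, since constants are distinct left-zero elements and no monoid with two left-zeros carries a left-invariant mean; and ``adjoining shifts'' adds nothing, since the shift is $t_1$, already present. The correct dense sub-monoid is $\End_\leq^\mathrm{tr}(\NN)$, but its amenability has to be established on its own. The paper does so not via \Cref{thm:fixed-pt} but by the elementary absorbing property $f\, t_1^m = t_1^{m+k}$, valid for every $f\in\End_\leq^\mathrm{tr}(\NN)$ and all large $m$: a Schauder--Tychonoff fixed point of $t_1$ is therefore automatically fixed by all of $\End_\leq^\mathrm{tr}(\NN)$, and this passes to the closure. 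The same argument covers $\Emb_\leq(\NN)$, so that \Cref{thm:fixed-pt} is invoked only for $\Epi_\leq(\NN)$.

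For abstract non-amenability of $\Epi_\leq(\NN)$, the detour through the pseudo-inverse conflates the two sides of amenability. The map $f\mapsto f^-$ is an anti-morphism of $\Epi_\leq(\NN)$ onto the sub-monoid $N'\subseteq\Emb_\leq(\NN)$ of maps fixing $1$, so amenability of $\Epi_\leq(\NN)$ is equivalent to amenability of $(N')^\mathrm{op}$. But push-forward of means on $\NN$ under the canonical $N'$-action on $\NN$ is a \emph{left} $N'$-action, so the disjoint-image trick --- even restricted to means vanishing on $\{1\}$, as you propose --- rules out a left-invariant mean for $N'$ itself, which only shows that $\Epi_\leq(\NN)^\mathrm{op}$ is non-amenable, not $\Epi_\leq(\NN)$. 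This is precisely the obstacle you flag at the end, and passing to means supported away from $1$ does not cross the left/right divide. The paper argues directly on $\Epi_\leq(\NN)$: it preserves the compact convex set of diffuse means on $\NN$, since preimages of finite sets under order-preserving surjections are finite; invariance under $s_1$ forces any fixed diffuse mean to give mass $1/p$ to each residue class mod $p$; and a ``waltz'' map $w\in\Epi_\leq(\NN)$ with $w^{-1}(2\NN)=3\NN$ then yields $1/2=1/3$, a contradiction.
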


\begin{remark}
The main significance of this theorem regards $\Epi_\leq(\NN)$, where topological amenability will be deduced from \Cref{thm:fixed-pt} and non-amenability requires an argument.

The three other monoids are simpler: we shall see that a stronger form of topological amenability follows readily as in \Cref{rem:Sop} above, whereas non-amenability is immediately apparent from the action on $\NN$.
\end{remark}

In preparation for the proof we introduce one more sub-monoid of interest: the monoid $\End_\leq^\mathrm{tr}(\NN)$ of maps $f\colon\NN\to\NN$ that are eventually translations in the following sense:
\begin{equation}\label{eq:trans}
\exists N \in \NN \ \exists k\in \ZZ \ \forall n\geq N: f(n) = n+k.
\end{equation}
Furthermore, we have representations
\begin{equation*}
S, S^\mathrm{op} \longrightarrow \End_\leq^\mathrm{tr}(\NN) \ \subset \ \End_\leq^\infty(\NN) \ \subset \ \End_\leq(\NN)
\end{equation*}
defined as follows. Consider for $i\in \NN$ the map $s_i\colon \NN\to \NN$ defined by
\begin{equation*}
 s_i(n) = 
    \begin{cases}
        n &\text{ if } n \leq i, \\
        n-1 &\text{ if } n \geq i+1.
    \end{cases}
\end{equation*}
Then $s_i$ lies in $\Epi_\leq(\NN) \cap \End_\leq^\mathrm{tr}(\NN)$ and the facial relations hold. Thus this gives a representation
\begin{equation*}
S \longrightarrow \Epi_\leq^\mathrm{tr}(\NN) := \Epi_\leq(\NN) \cap \End_\leq^\mathrm{tr}(\NN);
\end{equation*}
one can see on the normal forms that this representation is an embedding of $S$.

On the other hand, we consider  $t_i\colon \NN\to \NN$ defined by
\begin{equation*}
 t_i(n) = 
    \begin{cases}
        n &\text{ if } n \leq i-1, \\
        n+1 &\text{ if } n \geq i.
    \end{cases}
\end{equation*}
This time we obtain similarly a representation
\begin{equation*}
S^\mathrm{op} \longrightarrow \Emb_\leq^\mathrm{tr}(\NN) := \Emb_\leq(\NN) \cap \End_\leq^\mathrm{tr}(\NN).
\end{equation*}

\begin{proof}[Proof of \Cref{thm:related}]
We begin with the (topological) amenability statements. We claim that the embedding of $S$ is \emph{onto} $\Epi_\leq^\mathrm{tr}(\NN)$. Indeed, given $f\in \Epi_\leq^\mathrm{tr}(\NN)$, consider the multiplicities $m_i=|f^{-1}(\{i\})|$. Then $m_i\geq 1$ with equality for all but finitely many $i\in\NN$. Now $f$ can be written as
\begin{equation*}
f =   s_N^{m_N -1} \,  s_{N-1}^{m_{N-1} -1}  \cdots  s_2 ^{m_2 -1}  \,  s_1^{m_1 -1} 
\end{equation*}
with any $N$ large enough; this witnesses that $f$ lies in the image of $S$. We recovered here the descending normal forms of \Cref{sec:facial}.

On the other hand, $\Epi_\leq^\mathrm{tr}(\NN)$ is dense in $\Epi_\leq(\NN)$. Indeed, given $N\in \NN$, any element $f\in \Epi_\leq(\NN)$ can be approximated by $f_N\in \Epi_\leq^\mathrm{tr}(\NN)$ defined by
\begin{equation*}
 f_N(n) = 
    \begin{cases}
        f(n) &\text{ if } n \leq N, \\
        f(N) + n-N &\text{ if } n \geq N+1.
    \end{cases}
\end{equation*}
In view of \Cref{thm:fixed-pt}, it follows that $\Epi_\leq(\NN)$ is amenable as a topological monoid. 

We turn to the other monoids. Observe first that for every $f\in \End_\leq^\mathrm{tr}(\NN)$ we have
\begin{equation*}
f \, t_1^{m} = t_1^{m+k}
\end{equation*}
whenever $m\geq N-1$, where $N$ and $k$ are as in~\eqref{eq:trans}. This is well defined: $m+k\geq 0$ because~\eqref{eq:trans} forces $N+k\geq 1$. As in \Cref{rem:Sop}, this shows that any $t_1$-fixed point in any action of $\End_\leq^\mathrm{tr}(\NN)$ or of $\Emb_\leq^\mathrm{tr}(\NN)$ will be fixed by that larger monoid. On the other hand, the approximation $f_N$ fiven by the exact same formula as above shows that $\Emb_\leq^\mathrm{tr}(\NN)$ is dense in $\Emb_\leq(\NN)$ and that $\End_\leq^\mathrm{tr}(\NN)$ is dense in $\End_\leq(\NN)$, a fortiori also in $\End_\leq^\infty(\NN)$. This shows that these three Polish monoids have the fixed point property even for possibly non-affine actions, applying Schauder--Tychonoff to $t_1$.

\medskip
We now turn to the non-amenability statements. The simplest cases are $\End_\leq(\NN)$, $\Emb_\leq(\NN)$, and $\End_\leq^\infty(\NN)$ because none of them admits an invariant mean on $\NN$ for the canonical action. Indeed they contain the elements $n\mapsto 2n$ and $n\mapsto 2n+1$, which have disjoint images.

It remains to show that $\Epi_\leq(\NN)$ is non-amenable. Consider the convex compact space $K$ of \emph{diffuse} means on $\NN$; then $\Epi_\leq(\NN)$ preserves $K$ since pre-images of finite sets under maps in $\Epi_\leq(\NN)$ remain finite. Suppose for a contradiction that $\mu\in K$ is fixed by $\Epi_\leq(\NN)$. Given $p\in \NN$, any congruence class modulo $p$ has $\mu$-mass $1/p$. Indeed, $s_1$ permutes these congruence classes up to finite subsets, and $\mu$ ignores finite subsets. Now consider the waltz map $w\in \Epi_\leq(\NN)$ defined by
\begin{equation*}
 w(n) = 
    \begin{cases}
        2 n/3 &\text{ if } n\equiv 0 \mod 3, \\
        2 \lfloor n/3 \rfloor +1  &\text{ otherwise},
    \end{cases}
\end{equation*}
wherein $\lfloor \cdot \rfloor$ denotes rounding down. The pre-image of the even numbers under $w$ consists exactly of the multiples of three, which contradicts the previous property regarding congruence classes.
\end{proof}

Finally, we record one more structure on the monoid $\End_\leq^\infty(\NN)$, namely the pseudo-inverse $f\mapsto f^-$ defined by
\begin{equation*}
f^-(x) = \min\{ y \in \NN : f(y) \geq x \}.
\end{equation*}
This pseudo-inverse will clarify the respective positions of $\Emb_\leq(\NN)$ and $\Epi_\leq(\NN)$ in $\End_\leq^\infty(\NN)$, as well as the representations of $S$ and $S^\mathrm{op}$.

We first list a few properties that are straightforward verifications.

\begin{proposition}\label{prop:inverse}
Let $f, g\in \End_\leq^\infty(\NN)$.
  \begin{enumerate}
\item $(f g)^- = g^- f^-$.
\item $f^- f \leq \mathrm{Id}$, $f f^- \geq \mathrm{Id}$.
\item $f^- f = \mathrm{Id} \ \Longleftrightarrow\ f\in \Emb_\leq(\NN)$.
\item $f f^- = \mathrm{Id} \ \Longleftrightarrow\ f\in \Epi_\leq(\NN)$.
  \end{enumerate}\qed
\end{proposition}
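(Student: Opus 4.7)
The plan is to verify all four items directly from the definition $f^-(x) = \min\{y \in \NN : f(y) \geq x\}$, using one key observation as the workhorse: for $f\in\End_\leq^\infty(\NN)$ and all $x,y\in\NN$,
\begin{equation*}
f(y) \geq x \ \Longleftrightarrow\ y \geq f^-(x).
\end{equation*}
The forward implication follows because $f^-(x)$ is defined as the minimum such $y$; the reverse uses that $f$ is order-preserving, giving $f(y) \geq f(f^-(x)) \geq x$. Note that the set defining $f^-(x)$ is always non-empty precisely because $f\in\End_\leq^\infty(\NN)$ is unbounded, so $f^-$ is well-defined and itself order-preserving and unbounded, hence in $\End_\leq^\infty(\NN)$.

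Using this equivalence, item (1) is a one-line computation:
\begin{equation*}
(fg)^-(x) = \min\{y : f(g(y)) \geq x\} = \min\{y : g(y) \geq f^-(x)\} = g^-(f^-(x)).
\end{equation*}
For item (2), setting $y=x$ in the key equivalence gives $f(x) \geq f(x)$, hence $x \geq f^-(f(x))$; conversely taking $y = f^-(x)$ gives $f(f^-(x)) \geq x$.

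For item (3), if $f$ is injective then order-preservation upgrades to strict, so $f(y) \geq f(x)$ forces $y \geq x$, whence $f^-(f(x)) = x$. Conversely, if $f(x) = f(x')$ for $x < x'$, then $f^-(f(x')) \leq x < x'$, contradicting $f^- f = \mathrm{Id}$. For item (4), if $f$ is surjective then for each $x$ let $y_0 = \min f^{-1}(\{x\})$; monotonicity forces $f(y) < x$ for $y < y_0$, so $f^-(x) = y_0$ and $f(f^-(x)) = x$. Conversely $ff^- = \mathrm{Id}$ gives surjectivity for free.

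No step is genuinely hard; the only point requiring a moment of care is to check that $f^-(x)$ is well-defined (which uses unboundedness, i.e.\ membership in $\End_\leq^\infty(\NN)$) and to handle the two directions of (3) and (4) symmetrically. All four items then flow from the single equivalence above.
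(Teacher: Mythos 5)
Your proof is correct. The paper leaves this as an unproved ``straightforward verification'' (the statement carries a \qed with no argument), and your derivation---isolating the Galois-connection-style equivalence $f(y)\geq x \Longleftrightarrow y\geq f^-(x)$ and reading off all four items from it---is exactly the clean way to supply that verification.
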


If follows that this anti-endomorphism switches $\Emb_\leq(\NN)$ and $\Epi_\leq(\NN)$.

\begin{corollary}
Let $f\in \End_\leq^\infty(\NN)$.
\begin{enumerate}
\item $ f\in \Emb_\leq(\NN)\ \Longrightarrow\ f^-\in \Epi_\leq(\NN)$.
\item $ f\in \Epi_\leq(\NN)\ \Longrightarrow\ f^-\in \Emb_\leq(\NN)$.
  \end{enumerate}
\end{corollary}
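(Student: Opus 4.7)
The plan is to read both implications straight off from parts (iii) and (iv) of \Cref{prop:inverse}, together with the trivial fact that a section of the identity is injective and a retraction of the identity is surjective. Before applying those, I would quickly verify that $f^-$ itself lies in $\End_\leq^\infty(\NN)$ so that the conclusions even make sense: the map $x\mapsto \min\{y:f(y)\ge x\}$ is order-preserving because the set $\{y:f(y)\ge x\}$ shrinks as $x$ grows, and it tends to infinity because $f$, being in $\End_\leq^\infty(\NN)$, is bounded on each finite initial segment of $\NN$.

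For item (1), assume $f\in\Emb_\leq(\NN)$. Part (iii) of \Cref{prop:inverse} gives $f^- f = \mathrm{Id}$, which exhibits every $n\in\NN$ as $f^-(f(n))$ and thereby shows $f^-$ is surjective; combined with the previous paragraph this gives $f^-\in\Epi_\leq(\NN)$. For item (2), assume instead $f\in\Epi_\leq(\NN)$. Part (iv) gives $f f^- = \mathrm{Id}$, so any equality $f^-(x)=f^-(x')$ forces $x = f(f^-(x)) = f(f^-(x')) = x'$; thus $f^-$ is injective and belongs to $\Emb_\leq(\NN)$.

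There is no real obstacle here: once the composition identities of \Cref{prop:inverse} are granted, the corollary amounts to an elementary bookkeeping observation about retractions and sections of the identity on $\NN$. If anything, the only step worth spelling out is the sanity check that $f^-\in\End_\leq^\infty(\NN)$, which is already implicit in the definition.
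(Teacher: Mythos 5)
Your proof is correct and takes essentially the same route as the paper: both read the result off from parts~(3) and~(4) of \Cref{prop:inverse} by exhibiting a one-sided inverse of $f^-$. You in fact simplify slightly, observing that $f$ itself already serves as the right (resp.\ left) inverse of $f^-$, whereas the paper first applies the anti-homomorphism identity $(fg)^- = g^- f^-$ to produce $(f^-)^-$ in that role.
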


\begin{proof}
Suppose $f\in \Emb_\leq(\NN)$. We use \Cref{prop:inverse} as follows:
\begin{equation*}
\mathrm{Id} = \mathrm{Id}^- = (f^- f )^- = f^- (f^-)^-.
\end{equation*}
Thus $f^-$ admits a right inverse, which shows that it is onto. The proof of the second implication is the mirror image of this one.
\end{proof}

Returning to the embeddings of both $S$ and $S^\mathrm{op}$ that we exhibited, we check that the pseudo-inverse also intertwines them up to a shift in the indices, and we have:
\begin{equation}\label{eq:intertwine}
t_i^- = s_{i}, \ s_i^- = t_{i+1}; \ \ s_i t_i = s_i t_{i+1} = \mathrm{Id}.
\end{equation}
Putting everything together we deduce:

\begin{proposition}\label{prop:big-picture}
The representations of $S$ and $S^\mathrm{op}$ are isomorphisms onto $\Epi_\leq^\mathrm{tr}(\NN)$, respectively onto $\Emb_\leq^\mathrm{tr}(\NN)$, and the diagram of morphisms and anti-morphisms 
\begin{equation*}
\xymatrix{S \ar^-{\cong}[r] & \Epi_\leq^\mathrm{tr}(\NN) \ar[r] & \Epi_\leq(\NN)\\
S^\mathrm{op} \ar^-{\cong}[r] \ar_{=}[u] & \Emb_\leq^\mathrm{tr}(\NN) \ar[r]\ar_{-}[u] & \Emb_\leq(\NN)\ar_{-}[u]}
\end{equation*}
commutes.
\end{proposition}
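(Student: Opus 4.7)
My plan has three steps. First, I would verify that $S \to \Epi_\leq^\mathrm{tr}(\NN)$ is an isomorphism. Surjectivity is essentially contained in the proof of \Cref{thm:related}: for each $f \in \Epi_\leq^\mathrm{tr}(\NN)$ with multiplicities $m_i := |f^{-1}(\{i\})|$ one writes
\begin{equation*}
f = s_N^{m_N-1} s_{N-1}^{m_{N-1}-1} \cdots s_1^{m_1-1}
\end{equation*}
for any sufficiently large $N$, which exhibits $f$ as an image. For injectivity, I observe that this expression is precisely the descending normal form produced in the proof of \Cref{lem:monoid-inj}; since the sequence $(m_i)$ is read off directly from $f$, distinct descending normal forms in $S$ produce distinct maps in $\Epi_\leq^\mathrm{tr}(\NN)$.

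Next, I would establish the mirror isomorphism $S^\mathrm{op} \to \Emb_\leq^\mathrm{tr}(\NN)$. Each $g \in \Emb_\leq^\mathrm{tr}(\NN)$ is determined by its finite set of omitted values $\NN \smallsetminus g(\NN) = \{h_1 < \cdots < h_r\}$, and one checks by induction on $r$ that $g$ equals an explicit product of $t_i$'s encoding these omissions, giving surjectivity. Uniqueness of the resulting normal form in $S^\mathrm{op}$ then yields injectivity. One could alternatively leverage pseudo-inverse: by \Cref{prop:inverse} it is an anti-morphism $\Emb_\leq \to \Epi_\leq$, and \eqref{eq:intertwine} identifies the image of $S^\mathrm{op}$ in $\Emb_\leq^\mathrm{tr}(\NN)$ with the pseudo-inverse image of the $S$-image from the first step.

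Commutativity of the diagram then follows by checking on generators, since each row is a morphism and each vertical pseudo-inverse arrow is an anti-morphism. For the left square and $s_i \in S^\mathrm{op}$, both paths yield the map $s_i \in \Epi_\leq^\mathrm{tr}(\NN)$ thanks to the identity $t_i^- = s_i$ from \eqref{eq:intertwine}; for the right square, both paths starting at $g \in \Emb_\leq^\mathrm{tr}(\NN)$ produce $g^- \in \Epi_\leq(\NN)$ trivially. The main obstacle I anticipate is the mirror surjectivity argument for $S^\mathrm{op}$: one must track how successive $t_i$'s compose (their indices shift, analogously to \eqref{eq:intertwine}), and the non-involutivity of pseudo-inverse on $\End_\leq^\infty(\NN)$---for instance $(s_1^-)^- = s_2$ rather than $s_1$---blocks a purely formal reduction to the first step via duality.
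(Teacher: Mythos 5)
Your plan matches the paper's proof in its essential structure: the isomorphism $S \to \Epi_\leq^\mathrm{tr}(\NN)$ is deferred to the proof of Theorem~\ref{thm:related}, the surjectivity of $S^\mathrm{op} \to \Emb_\leq^\mathrm{tr}(\NN)$ is established by an induction that peels off one $t_i$ at a time, and commutativity is checked on generators via $(fg)^- = g^-f^-$ and $t_i^- = s_i$. Note that your induction on the number $r$ of omitted values is literally the same as the paper's induction on the translation amount $k$, since $r = k$ for an injective order-preserving map that is eventually a translation. The one place you diverge is injectivity of $S^\mathrm{op} \to \Emb_\leq^\mathrm{tr}(\NN)$: you propose a direct normal-form argument (which works, but requires tracking how the $t_i$ indices shift under composition), whereas the paper does something slicker: it first verifies commutativity of the left square on generators, and then reads off injectivity of the bottom-left arrow from the fact that its composition with the anti-morphism $^-$ equals the already-injective top-left arrow. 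This order of operations (commutativity first, injectivity second) is what makes the paper's argument essentially free. You also correctly diagnose that the tempting shortcut of applying $^-$ to the $S$-image cannot work, because $s_i^- = t_{i+1}$ rather than $t_i$: the pseudo-inverse image of $\Epi_\leq^\mathrm{tr}(\NN)$ misses $t_1$ entirely, so there is no formal duality to invoke and a genuine second surjectivity argument is needed.
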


\begin{proof}
To clarify the diagram: horizontal arrows are morphisms, vertical arrows are anti-morphisms given by the identity map, respectively the pseudo-inverse. The fact that the diagram commutes follows from the first item of \Cref{prop:inverse} together with~\eqref{eq:intertwine}.

The only point not yet established is that the representation $S^\mathrm{op}\to \Emb_\leq^\mathrm{tr}(\NN)$ is an isomorphism; the corresponding statement for $S$ was seen in the proof of \Cref{thm:related}.

The injectivity of $S^\mathrm{op}\to \Emb_\leq^\mathrm{tr}(\NN)$ follows from the diagram. For surjectivity, let $f\in \Emb_\leq^\mathrm{tr}(\NN)$; we argue by induction on the translation $k$ uniquely determined by expression~\eqref{eq:trans}. Note that $k\geq 0$ since $f$ is injective and the case $k=0$ corresponds to $f=\mathrm{Id}$. If $f\neq \mathrm{Id}$, we take $a\in\NN$ maximal for the property that $f(n)=n$ for all $n<a$. Thus $f(a)\geq a+1$ and we can write $f = t_a f'$ for some $f'\in \Emb_\leq^\mathrm{tr}(\NN)$ with translation $k-1$; indeed we can take $f'=s_a f$.
\end{proof}

For the record, we note another few properties of the pseudo-inverse.

\begin{proposition}\label{prop:inj-surj}
\ 
\begin{enumerate}
\item $f\mapsto f^-$ is continuous on $\End_\leq^\infty(\NN)$.
\item Let $f, g\in \Epi_\leq(\NN)$. Then $f^- = g^- \Longrightarrow f=g$.
\item For every $f\in \Epi_\leq(\NN)$ there is $g\in \Emb_\leq(\NN)$ with $f=g^-$.
\end{enumerate}
\end{proposition}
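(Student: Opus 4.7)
For part (1), since the topology on $\End_\leq^\infty(\NN)$ is pointwise convergence of $\NN$-valued functions and $\NN$ is discrete, $f_\alpha \to f$ means that for every $y\in\NN$ the value $f_\alpha(y)$ is eventually equal to $f(y)$. My plan is to fix $x\in\NN$, set $y_0 = f^-(x)$, and use the two defining inequalities $f(y_0) \geq x$ and $f(y_0-1) < x$ (the latter vacuous when $y_0 = 1$). For every $\alpha$ large enough, the values $f_\alpha(y_0)$ and $f_\alpha(y_0-1)$ agree with $f(y_0)$ and $f(y_0-1)$ respectively, so the same two inequalities hold for $f_\alpha$; this forces $f_\alpha^-(x) = y_0 = f^-(x)$. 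Since $x$ was arbitrary, $f_\alpha^- \to f^-$ pointwise.

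For part (2), the strategy is to recover $f$ explicitly from $f^-$. Given $f\in \Epi_\leq(\NN)$, surjectivity and monotonicity imply that each preimage $f^{-1}(\{x\})$ is a non-empty finite interval $[a_x,b_x]$; directly from the definition, $f^-(x) = a_x$. Because the intervals $[a_x,b_x]$ partition $\NN$, we have $a_{x+1} = b_x+1$, and consequently the formula
\begin{equation*}
f(y) \;=\; \max\{\, x\in\NN : f^-(x)\leq y\,\}
\end{equation*}
holds for every $y$. The right-hand side depends only on $f^-$, so $f^- = g^-$ immediately yields $f = g$.

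For part (3), the naive guess $g = f^-$ does not work: although $f^-\in \Emb_\leq(\NN)$ by the corollary, one sees on simple examples that $(f^-)^- \neq f$ in general (the iterated pseudo-inverse only recovers the leftmost preimages). My plan is instead to use the \emph{rightmost preimage} map $g(y) = b_y = \max f^{-1}(\{y\})$, which is well-defined and finite because $f$ is onto and tends to infinity. The inequality $b_{y+1} \geq a_{y+1} = b_y + 1$ shows that $g$ is strictly increasing, hence $g\in \Emb_\leq(\NN)\cap \End_\leq^\infty(\NN)$. Finally, since $x\in [a_{f(x)}, b_{f(x)}]$ we have $b_y \geq x \Longleftrightarrow y \geq f(x)$, whence $g^-(x) = \min\{y : b_y\geq x\} = f(x)$, as required.

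I expect the whole proposition to be essentially routine once the right definitions are in place; the only real subtlety is in part (3), where the asymmetry in the definition $f^-(x)=\min\{y:f(y)\geq x\}$ forces one to use the rightmost rather than the leftmost section of $f$ when constructing the embedding whose pseudo-inverse is $f$.
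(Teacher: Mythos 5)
Your proof is correct, and on parts (2) and (3) it takes a genuinely different and more explicit route than the paper.

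For part (1) you flesh out the same idea the paper dispatches in one line: in the discrete pointwise topology, $f^-(x)$ is determined by the values of $f$ at $f^-(x)$ and $f^-(x)-1$, so two maps agreeing on a large enough finite initial segment have $f^-$ agreeing there too.

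For part (2) the paper argues by contradiction: taking the smallest $N$ with $f(N)\neq g(N)$, it combines $ff^-=gg^-=\mathrm{Id}$ (from \Cref{prop:inverse}) with the order-preserving property to reach an absurdity. You instead exhibit an explicit left inverse to the pseudo-inverse operation on $\Epi_\leq(\NN)$, namely the formula $f(y)=\max\{x:f^-(x)\leq y\}$, which makes the injectivity immediate. This is cleaner in that it actually reconstructs $f$ from $f^-$.

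For part (3) the paper reduces, via continuity and density of $\Epi_\leq^\mathrm{tr}(\NN)$ in $\Epi_\leq(\NN)$, to the identification $\Epi_\leq^\mathrm{tr}(\NN)\cong S$, $\Emb_\leq^\mathrm{tr}(\NN)\cong S^\mathrm{op}$ from \Cref{prop:big-picture}. You instead directly construct the ``rightmost-preimage'' map $g(y)=\max f^{-1}(\{y\})$, check it is a strictly increasing element of $\Emb_\leq(\NN)$, and verify $g^-=f$ via the partition of $\NN$ by the intervals $[a_y,b_y]$. Your observation that the naive guess $g=f^-$ fails (e.g.\ $(s_1^-)^-=t_2^-=s_2\neq s_1$) correctly identifies why the rightmost rather than leftmost section is needed. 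The explicit construction has the advantage of being entirely self-contained: it does not depend on the continuity statement (1), on the density of the translation monoids, or on \Cref{prop:big-picture}.
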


\begin{proof}
Continuity follows from the definition. For the injectivity on surjections, suppose for a contradiction $f\neq g$ and let $N\in \NN$ be the smallest integer with $f(N)\neq g(N)$. We can assume $f(N)>g(N)$ and write $a=f(N)$ and $n=f^-(a)=g^-(a)$; note that $n\leq N$. By \Cref{prop:inverse}, $f f^- = g g^- = g f^-$ and therefore the image of $f^-$ must miss $N$. It follows $n<N$ and therefore
\begin{equation*}
g(n) = f(n) \geq a = f(N) > g(N),
\end{equation*}
contradicting that $g$ is order-preserving.

For the last point, the continuity shows that it suffices to justify that $f\in \Epi_\leq^\mathrm{tr}(\NN)$ lies in the image of $\Emb_\leq^\mathrm{tr}(\NN)$; this follows from \Cref{prop:big-picture}.
\end{proof}

Since \Cref{thm:related} describes the amenability properties of the four Polish monoids, we finally record the situation of the four countable sub-monoids (respectively semigroup) encountered along the way.

\begin{corollary}
$\Epi_\leq^\mathrm{tr}(\NN)$, $\Emb_\leq^\mathrm{tr}(\NN)$ and $\End_\leq^\mathrm{tr}(\NN)$ are amenable, whilst $\End_\leq^\mathrm{fin}(\NN)$ is not.
\end{corollary}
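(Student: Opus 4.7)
The plan is to obtain the three amenability claims essentially for free from what has been set up in this section, and to rule out amenability of $\End_\leq^\mathrm{fin}(\NN)$ by a simple algebraic obstruction.

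For $\Epi_\leq^\mathrm{tr}(\NN)$ and $\Emb_\leq^\mathrm{tr}(\NN)$, I would transport amenability across the isomorphisms with $S$ and $S^\mathrm{op}$ supplied by \Cref{prop:big-picture}, invoking \Cref{thm:fixed-pt} and \Cref{rem:Sop} respectively. For $\End_\leq^\mathrm{tr}(\NN)$, I would recycle the argument already performed inside the proof of \Cref{thm:related}: the monoid identity $f\, t_1^m = t_1^{m+k}$, valid for $m$ large enough in terms of the data $N, k$ attached to $f$ by~\eqref{eq:trans}, shows that in any affine action every $t_1$-fixed point $p$ is automatically fixed by the whole monoid, since $f \cdot p = f \cdot t_1^m \cdot p = t_1^{m+k} \cdot p = p$. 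A $t_1$-fixed point on any nonempty convex compact $\End_\leq^\mathrm{tr}(\NN)$-space exists by Markov--Kakutani applied to the single continuous affine transformation $t_1$.

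For the non-amenability of $\End_\leq^\mathrm{fin}(\NN)$ the key observation is that each constant map $c_a \colon n \mapsto a$ lies in this semigroup and is a left zero: $c_a h = c_a$ for every $h \in \End_\leq^\mathrm{fin}(\NN)$. A hypothetical left-invariant mean $m$ would then satisfy $m(f) = m\bigl(h \mapsto f(c_a h)\bigr) = f(c_a)$ for every index $a$ and every $f \in \ell^\infty\bigl(\End_\leq^\mathrm{fin}(\NN)\bigr)$, an impossibility as soon as $f$ separates two distinct constant maps. Equivalently, letting the various $c_a$ act as pairwise distinct constant maps to points of any compact convex set with at least two elements respects the relations $c_a c_b = c_a$ and visibly has no common fixed point. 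I anticipate no real obstacle; the one minor point to check is that the equivalence between left-invariant means and affine fixed-point properties of \Cref{thm:Day} transcribes verbatim from monoids to semigroups, which it does, Day's argument being insensitive to the presence of a unit.
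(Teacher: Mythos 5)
Your proposal is correct and follows essentially the same route as the paper: the isomorphisms of \Cref{prop:big-picture} transport the amenability (indeed the Schauder--Tychonoff property) from $S$ and $S^\mathrm{op}$ to $\Epi_\leq^\mathrm{tr}(\NN)$ and $\Emb_\leq^\mathrm{tr}(\NN)$, the absorption identity $f\,t_1^m = t_1^{m+k}$ handles $\End_\leq^\mathrm{tr}(\NN)$, and the existence of distinct left zeros (the constant maps) rules out a left-invariant mean on $\End_\leq^\mathrm{fin}(\NN)$. You spell out the obstruction from left zeros in slightly more detail than the paper, but the content is identical.
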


\begin{proof}
We have seen that $\Epi_\leq^\mathrm{tr}(\NN)$ is isomorphic to $S$, which is amenable by \Cref{thm:fixed-pt}. As recorded in the proof of \Cref{thm:related}, both $\Emb_\leq^\mathrm{tr}(\NN)$ and $\End_\leq^\mathrm{tr}(\NN)$ have a stronger Schauder--Tychonoff property because they contain $t_1$.

As for $\End_\leq^\mathrm{fin}(\NN)$, it cannot carry an invariant mean because it has distinct left-zero elements, namely the constant maps.
\end{proof}

\subsection*{Epilogue}
We opened by presenting $S$ as a single algebraic object containing all the face maps usually defined separately in every dimension. In closing, we note that the simultaneous realization of $S$ and $S^\mathrm{op}$ inside $\End_\leq(\NN)$ is a natural home for the richer simplicial structure introduced by Eilenberg--Zilber~\cite{Eilenberg-Zilber50}. (We caution that notation and terminology have mutated since~\cite{Eilenberg-Zilber50}, but the objects remain the same.)

Eilenberg--Zilber list the following face and degeneracy relations, where $q$ is the dimension; see~\cite[\S10]{Eilenberg-Zilber50}:
\begin{align*}
\eta_q^i \, \epsilon _q^j &= \epsilon_{q-1}^j \, \eta_{q-1}^{i-1} \kern3mm \forall j<i\\
\eta_q^i \, \epsilon _q^i &= \mathrm{Id}_{q-1} = \eta_q^i \, \epsilon _q^{i+1}\\
\eta_q^i \, \epsilon _q^j &= \epsilon_{q-1}^{j-1}  \, \eta_{q-1}^i \kern3mm \forall i+1<j\\
\eta_{q-1}^i \, \eta_q^j &= \eta_{q-1}^j \, \eta_q^{i+1}  \kern3mm \forall j\leq i\\
\eta_{q-1}^i \, \eta_q^j &= \eta_{q-1}^{j-1} \, \eta_q^i  \kern3mm \forall i <j
\end{align*}
This corresponds to  relations satisfied by $S$ and $S^\mathrm{op}$ when both are embedded in $\End_\leq^\mathrm{tr}(\NN) \subseteq \End_\leq(\NN)$, where all $\eta_q^i$ are replaced by $s_i$ and all $\epsilon _q^i$ by $t_i$. (The only immaterial difference is that face and degeneracy operators are usually indexed by $\NN\cup\{0\}$.) Indeed, the first and third above relations are readily verified for $s_i, t_i$ and all others have already been discussed. In fact, it is not difficult to check that $\End_\leq^\mathrm{tr}(\NN)$ is generated by $S \cup S^\mathrm{op}$; indeed, one can even write $\End_\leq^\mathrm{tr}(\NN) = S  S^\mathrm{op}$.

In conclusion,  the monoid $S$ relevant to ``semi-simplicial topology'' has the fixed point property by \Cref{thm:fixed-pt}, which seems non-trivial. On the other hand, the larger monoid $\End_\leq^\mathrm{tr}(\NN)$ corresponding to ``simplicial topology'' (i.e. with degeneracies) has the stronger Schauder--Tychonoff property simply by virtue of the absorbing nature of the degeneracy $t_1$.

\begin{bibdiv}
\begin{biblist}

\bib{Arens51}{article}{
      author={Arens, Richard},
       title={The adjoint of a bilinear operation},
        date={1951},
        ISSN={0002-9939,1088-6826},
     journal={Proc. Amer. Math. Soc.},
      volume={2},
       pages={839\ndash 848},
}

\bib{Cannon-Floyd-Parry}{article}{
      author={Cannon, James~Welden},
      author={Floyd, William~J.},
      author={Parry, Walter~R.},
       title={Introductory notes on {R}ichard {T}hompson's groups},
        date={1996},
        ISSN={0013-8584},
     journal={Enseign. Math. (2)},
      volume={42},
      number={3-4},
       pages={215\ndash 256},
}

\bib{Carroll1872}{book}{
      author={Carroll, Lewis},
       title={Through the {L}ooking-{G}lass},
   publisher={MacMillan, London},
        date={1872},
}

\bib{Day57}{article}{
      author={Day, Mahlon~Marsh},
       title={Amenable semigroups},
        date={1957},
        ISSN={0019-2082},
     journal={Illinois J. Math.},
      volume={1},
       pages={509\ndash 544},
}

\bib{Day61}{article}{
      author={Day, Mahlon~Marsh},
       title={Fixed-point theorems for compact convex sets},
        date={1961},
     journal={Illinois J. Math.},
      volume={5},
       pages={585\ndash 590},
}

\bib{Dunford-Schwartz_I}{book}{
      author={Dunford, Nelson},
      author={Schwartz, Jacob~Theodore},
       title={Linear {O}perators. {I}. {G}eneral {T}heory},
      series={With the assistance of W. G. Bade and R. G. Bartle. Pure and
  Applied Mathematics, Vol. 7},
   publisher={Interscience Publishers Inc., New York},
        date={1958},
}

\bib{Eilenberg-Zilber50}{article}{
      author={Eilenberg, Samuel},
      author={Zilber, Joseph~Abraham},
       title={Semi-simplicial complexes and singular homology},
        date={1950},
        ISSN={0003-486X},
     journal={Ann. of Math. (2)},
      volume={51},
       pages={499\ndash 513},
}

\bib{FFMN}{unpublished}{
      author={Fournier-Facio, Francesco},
      author={Monod, Nicolas},
      author={Nariman, Sam},
       title={The bounded cohomology of transformation groups of {E}uclidean
  spaces and discs},
        date={2024},
        note={with an appendix by {A.} {Kupers}. Preprint, arXiv:2405.20395v1},
}

\bib{FreyPhD}{thesis}{
      author={Frey, Alexander~Hamilton, Jr},
       title={Studies on amenable semigroups},
        type={Ph.D. Thesis},
        date={1960},
}

\bib{Goldstine38}{article}{
      author={Goldstine, Herman~Heine},
       title={Weakly complete {B}anach spaces},
        date={1938},
        ISSN={0012-7094,1547-7398},
     journal={Duke Math. J.},
      volume={4},
      number={1},
       pages={125\ndash 131},
}

\bib{Huet}{incollection}{
      author={Huet, G{\'e}rard},
       title={Confluent reductions: abstract properties and applications to
  term rewriting systems},
        date={1977},
   booktitle={18th {A}nnual {S}ymposium on {F}oundations of {C}omputer
  {S}cience ({P}rovidence, {R}.{I}., 1977)},
   publisher={IEEE, Long Beach, CA},
       pages={30\ndash 45},
}

\bib{Kakutani38}{article}{
      author={Kakutani, Shizuo},
       title={Two fixed-point theorems concerning bicompact convex sets},
        date={1938},
     journal={Proc. Imp. Acad.},
      volume={14},
      number={7},
       pages={242\ndash 245},
}

\bib{Klawe77}{article}{
      author={Klawe, Maria},
       title={Semidirect product of semigroups in relation to amenability,
  cancellation properties, and strong {F}{\o}lner conditions},
        date={1977},
        ISSN={0030-8730,1945-5844},
     journal={Pacific J. Math.},
      volume={73},
      number={1},
       pages={91\ndash 106},
}

\bib{Moore_hindman}{incollection}{
      author={Moore, Justin~Tatch},
       title={Hindman's theorem, {Ellis}'s lemma, and {Thompson}'s group
  {{\(F\)}}},
        date={2015},
   booktitle={Selected topics in combinatorial analysis},
   publisher={Beograd: Matemati{\v{c}}ki Institut SANU},
       pages={171\ndash 187},
}

\bib{Moore_idem}{article}{
      author={Moore, Justin~Tatch},
       title={Nonexistence of idempotent means on free binary systems},
        date={2019},
        ISSN={0008-4395},
     journal={Can. Math. Bull.},
      volume={62},
      number={3},
       pages={577\ndash 581},
}

\bib{Newman}{article}{
      author={Newman, Maxwell Herman~Alexander},
       title={On theories with a combinatorial definition of ``equivalence''},
        date={1942},
        ISSN={0003-486X},
     journal={Ann. of Math. (2)},
      volume={43},
       pages={223\ndash 243},
}

\bib{vonNeumann29}{article}{
      author={von Neumann, Johann},
       title={Zur allgemeinen {T}heorie des {M}asses},
        date={1929},
     journal={Fund. Math.},
      volume={13},
       pages={73\ndash 116},
}

\end{biblist}
\end{bibdiv}
 
\end{document}